\documentclass[11pt]{article}
\usepackage[margin=1in]{geometry}
\usepackage{amsmath,amssymb,amsthm,mathtools,amsfonts}
\usepackage[colorlinks=true,linkcolor=blue,citecolor=blue,urlcolor=blue]{hyperref}
\usepackage{authblk}
\usepackage[sort]{cite}
\usepackage{tikz}
\usetikzlibrary{backgrounds,fit,positioning}
\usepackage{graphicx}

\usepackage{tikz}
\usepackage{pgfplots}
\pgfplotsset{compat=1.17}

\allowdisplaybreaks
\numberwithin{equation}{section}

\newtheorem{theorem}{Theorem}
\newtheorem{lemma}[theorem]{Lemma}
\newtheorem{corollary}[theorem]{Corollary}

\newtheorem{proposition}[theorem]{Proposition}
\theoremstyle{remark}

\newcommand \aln[2]
{
\begin{align}\label{#1}
#2
\end{align}
}

\newcommand\Abs[1]
{
\left | #1 \right |
}

\newcommand\Bra[1]
{
	\left ( #1 \right )
}

\newcommand \red[1] {{\color{red}{#1}}}

\newcommand{\s}{\mathcal A}
\newcommand{\N}{\mathbb N}
\newcommand{\Ccal}{\mathcal C}
\newcommand{\Fcal}{\mathcal F}
\newcommand{\z}{\boldsymbol z}
\newcommand{\w}{\boldsymbol w}
\newcommand{\st}{\operatorname{st}}
\newcommand{\ST}{\mathcal{ST}}
 \def \cala {{\cal A}}
 \def \mbfr {{\mathbb R}}
 
 \hypersetup{hidelinks}

\begin{document}
	\title{When chromatic polynomials coincide with list-color functions: a threshold linear in the maximum degree
	}
	
\author[1\thanks{Corresponding author. Email:
			meiqiaozhang95@163.com and meiqiaozhang@xmu.edu.cn.}]{Meiqiao Zhang}
		
\author[2\thanks{Email: fengming.dong@nie.edu.sg 
		and donggraph@163.com.}
	]{Fengming Dong}
	
	\affil[1]{\small School of Mathematical Sciences, Xiamen  University, China}

	\affil[2]{\small
		National Institute of Education,
		Nanyang Technological University, 
		Singapore}

	\date{}
	
	\maketitle

\begin{abstract}
Let $G$ be a simple graph with maximum degree $\Delta\ge 3$, and let $P(G,k)$ denote its chromatic polynomial. For each positive integer $k$, the list-color function $P_{\ell}(G,k)$ is the minimum number of $L$-colorings of $G$ over all $k$-assignments $L$. In this paper, we prove that $P_{\ell}(G,k)=P(G,k)$ for every integer $k\ge 23.41\Delta$. This gives a threshold for equality that is linear in the maximum degree and independent of the number of vertices or edges. It improves the known sufficient condition $k\ge |E(G)|-1$ for graphs with sufficiently many edges relative to their maximum degree.
\end{abstract}

\smallskip
\noindent \textbf{Keywords:} chromatic polynomial; list-color function; list-coloring

\smallskip
\noindent \textbf{Mathematics Subject Classification: 05C31, 05C30}


\section{Introduction}
In this article, we consider simple graphs only. 
For any graph $G$, let $V(G)$ and $E(G)$ be the vertex set and the edge set of $G$, respectively. For any non-empty subset $S$ of $V(G)$, let $G[S]$ denote the subgraph of $G$ induced by $S$. 
Denote by $\N$ the set of positive integers and by $\mbfr$ the set of real numbers. For any $k\in\N$, let $[k]=\{1,\dots,k\}$.
For any set $X$, we say $\z$ is a \textit{real-valued weight function} on $X$ if 
$\z$ is a 
mapping $X \rightarrow \mbfr$.

For any graph $G$, a 
{\it proper coloring} of $G$ is a mapping $\theta:V(G)\rightarrow \N$, such that $\theta(u)\neq \theta(v)$ for all $uv\in E(G)$.
For any $k\in \N$, a 
{\it proper $k$-coloring} of $G$ is a proper coloring $\theta$ such that $\theta(v)\in [k]$ for all $v\in V(G)$. 
Then the \textit{chromatic polynomial} $P(G, k)$ of $G$ is a polynomial that counts the number of proper $k$-colorings of $G$ for each $k \in \N$.
The chromatic polynomial was introduced by Birkhoff in~\cite{birk} as a tool for attacking the Four-Color Conjecture, but later became an important object of study in its own right due to its elegant properties.
More details on $P(G,k)$ can be found
in \cite{birk, birk2, dong1, dong0, Jackson2015, rea1, rea2}.

To generalize proper coloring, Vizing~\cite{viz}  and Erd\H{o}s, Rubin and Taylor~\cite{erdos} independently introduced the notion of list-coloring.
For any $k\in \N$,
a $k$-{\it assignment} of $G$ 
is a mapping $L:V(G)\rightarrow 2^{\N}$  such that
$|L(v)|=k$ for all $v\in V(G)$, and an $L$-\textit{coloring} of $G$ is a proper coloring $\theta$ with $\theta(v)\in L(v)$ for all $v\in V(G)$. 
Denote by $P(G,L)$ the number of $L$-colorings of $G$.
Then Kostochka and Sidorenko~\cite{kosto} introduced the \textit{list-color function} $P_\ell(G,k)$ of $G$ to
be the minimum value of $P(G, L)$  over all $k$-assignments $L$ of $G$ for each $k\in\N$. 
For more recent developments concerning $P_\ell(G,k)$, see \cite{Thom,Kaul22,Kaul23,Dong22,wangwei,Donner}.

Observe that there is a trivial $k$-assignment $L_0$ of $G$ with $L_0(v)=[k]$ for all $v\in V(G)$, which implies that $P(G,L_0)=P(G,k)$. Then by the definition of the list-color function, for each $k\in \N$,
\aln{listcolor}
{
P_\ell(G,k) \le P(G,k).
}
It is not difficult to verify that equality in (\ref{listcolor})  holds for 
any chordal graph $G$ and $k\in \N$~\cite{kosto}. Meanwhile, the inequality in (\ref{listcolor}) can be strict.
For example, 
$P(G,2)\ge 2$ holds for each bipartite graph $G$,
but $P_\ell(G,2)=0$ whenever $G$ contains $K_{2,4}$ as a subgraph.
Consequently, Kostochka and Sidorenko asked in~\cite{kosto} for which values of $k$ equality holds in (\ref{listcolor}). Donner first gave a surprising answer in~\cite{Donner}.

	\begin{theorem}[\cite{Donner}]\label{PLCG-Donner}
For any graph $G$, $P(G,k)=P_\ell(G,k)$ if $k$ is sufficiently large.
	\end{theorem}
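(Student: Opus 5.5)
We prove Theorem~\ref{PLCG-Donner} by comparing $P(G,L)$ with $P(G,k)$ through inclusion--exclusion over edge subsets, in the spirit of Whitney's expansion. For any $k$-assignment $L$,
\[
P(G,L)=\sum_{A\subseteq E(G)} (-1)^{|A|}\prod_{C}\Abs{\bigcap_{v\in C} L(v)},
\]
where the product runs over the connected components $C$ of the spanning subgraph $(V(G),A)$. For $L_0$ each factor equals $k$, which recovers $P(G,k)=\sum_A(-1)^{|A|}k^{c(A)}$. Write $n=|V(G)|$ and $m=|E(G)|$. The aim is to show that, for large $k$, any deviation of $L$ from a constant assignment strictly increases the number of colorings, or at least does not decrease it.

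\textbf{Step 1: separating by overlaps.} For each edge $uv$, let $d_{uv}=k-|L(u)\cap L(v)|$ measure how much the two lists disagree. Only $A=\emptyset$ and single edges contribute at order $k^{n-1}$:
\[
P(G,L)=k^n-\sum_{uv\in E}|L(u)\cap L(v)|\,k^{n-2}+R(L)=P(G,k)+k^{n-2}\sum_{uv}d_{uv}+\bigl(R(L)-R(L_0)\bigr).
\]
Here $R$ collects the terms with $|A|\ge 2$.

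\textbf{Step 2: bounding the remainder.} For $|A|\ge2$, each component factor $|\bigcap_{v\in C}L(v)|$ equals $k$ minus a quantity at most $\sum_{e\in E(T)}d_e$, where $T$ is a spanning tree of $C$ inside $A$. Indeed, a color missing from some list in $C$ is missing across some edge of $T$. Each such factor also lies between $0$ and $k$. Expanding $\prod_C k-\prod_C|\cdot|$ therefore gives
\[
\Bigl|\prod_C k-\prod_C|\cdot|\Bigr|\le k^{c(A)-1}\sum_{e\in A}d_e.
\]
This rests on the elementary inequality $\prod a_i-\prod b_i\le \sum (a_i-b_i)\prod_{j\ne i}a_j$ for $0\le b_i\le a_i$. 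Since $|A|\ge2$ forces $c(A)\le n-1$ whenever $A$ has any edge, each term is bounded by $k^{n-2}\sum_{e\in A}d_e$. That alone is not enough, because it only matches the main term. I would sharpen it by splitting further:
\begin{itemize}
\item if $A$ is a forest with $|A|\ge 2$, then $c(A)=n-|A|\le n-2$;
\item otherwise $c(A)\le n-|A|+1$ still gives $c(A)\le n-2$ once $|A|\ge 3$;
\item the case $|A|=2$ always has $c(A)=n-2$ in a simple graph.
\end{itemize}
So every term with $|A|\ge2$ is at most $k^{n-3}\sum_{e\in A}d_e$. Summing over all $A$, each edge $e$ lies in at most $2^{m-1}$ subsets, which gives
\[
|R(L)-R(L_0)|\le 2^{m-1}k^{n-3}\sum_e d_e.
\]

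\textbf{Step 3: conclusion.} Combining the two steps,
\[
P(G,L)-P(G,k)\ge k^{n-3}\bigl(k-2^{m-1}\bigr)\sum_e d_e\ge 0
\quad\text{whenever } k\ge 2^{m-1}.
\]
Hence $P_\ell(G,k)\ge P(G,k)$, and together with (\ref{listcolor}) this gives equality.

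The main obstacle is Step~2, namely checking carefully that every $|A|\ge2$ term really loses a full power of $k$ relative to the linear deficiency $\sum d_e$. The key is to use the spanning-tree bound on each component's intersection deficiency rather than crude bounds, and to treat the component containing several edges uniformly. If the product-difference estimate turned out to be too lossy, I would fall back on the telescoping bound with $b_i=|\bigcap_{v\in C_i} L(v)|$, charging each factor's deficiency to edges of $A$ only. The exponential threshold is fine here, since the theorem asks only for sufficiently large $k$.
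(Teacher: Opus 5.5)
Your proposal is correct, and it takes a genuinely different and far more elementary route than the one this paper supplies.

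The paper does not reprove Donner's theorem. It recovers it, with an explicit threshold, from Theorem~\ref{main1} together with two reductions: passing to connected components, and the known result for paths and cycles. Theorem~\ref{main1} rests on heavier machinery:
\begin{itemize}
\item Sokal's vertex-subset polymer expansions $P(G,k)=k^n\phi_{\Ccal}(\w_P)$ and $P(G,L)=k^n\phi_{\Ccal}(\w_L)$;
\item the interpolation $\w_t$ between them;
\item the Koteck\'y--Preiss/Dobrushin-type non-vanishing Lemma~\ref{lem:ratio}, fed by the $\Delta$-dependent tree counts of Lemmas~\ref{lem:vertextrees} and~\ref{lem:edge-trees};
\item a lower bound on $\varphi'/\varphi$.
\end{itemize}

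You instead stay with the edge-subset expansion (\ref{PGL}) and the same tree-deficiency estimate as Lemma~\ref{wang}, and compare term by term. The single-edge terms contribute exactly $k^{n-2}D$. Every term with $|A|\ge 2$ changes by at most $k^{c(A)-1}\sum_{e\in A}\alpha_L(e)\le k^{n-3}\sum_{e\in A}\alpha_L(e)$, and there are only finitely many such terms.

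What each approach buys:
\begin{itemize}
\item Your route needs no connectivity reduction, no separate treatment of paths and cycles, and no positivity of $P(G,k)$ via Brooks' theorem.
\item Because you count edge subsets crudely, with no cancellation, your threshold $2^{m-1}$ is exponential in $|E(G)|$. That is harmless for ``sufficiently large $k$''.
\item The paper's cluster-expansion machinery buys a threshold depending only on $\Delta$, plus the multiplicative bound (\ref{main1-e1}).
\item Wang--Qian--Yan refine essentially your counting to obtain a threshold linear in $|E(G)|$.
\end{itemize}

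One repair is needed. Your claim that a non-forest $A$ satisfies $c(A)\le n-|A|+1$ is false in general: for $A=E(K_4)$ one has $c(A)=n-3$, not $c(A)\le n-5$. You do not need it. Any two distinct edges of a simple graph form a forest spanning at least three vertices, so every $A$ with $|A|\ge 2$ has a spanning forest with at least two edges. Hence $c(A)\le n-2$, which is all Step~2 uses.

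Also sharpen the wording of the deficiency bound. Fix a root $r\in C$ and charge each color of $L(r)\setminus\bigcap_{v\in C}L(v)$ to the first edge $xy$ on its tree path away from $r$ with that color in $L(x)\setminus L(y)$. Counting colors of the whole union that are ``missing somewhere'' would overcount.
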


Theorem~\ref{PLCG-Donner} reveals that the list-color function of every graph eventually coincides with its chromatic polynomial. 
Then the next question is naturally that for any graph $G$, what is the minimum integer $\tau(G)$ such that $P(G,k)=P_\ell(G,k)$ whenever $k\ge \tau(G)$.
 In 2009, Thomassen~\cite{Thom} established an upper bound for $\tau(G)$ in terms of the order of $G$.

	\begin{theorem}[\cite{Thom}]\label{PLCG-Thom}
For any graph $G$, $\tau(G)\le |V(G)|^{10}+1$.
	\end{theorem}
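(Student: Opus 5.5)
Thomassen's approach is to compare $P(G,L)$ with $P(G,k)$ by expanding both through inclusion--exclusion over edge subsets (Whitney's expansion) and to show that any deviation of $L$ from a constant assignment costs more colorings than it saves once $k$ is polynomially large in $n=|V(G)|$. For an arbitrary $k$-assignment $L$, Whitney's expansion gives
\[
P(G,L)=\sum_{A\subseteq E(G)}(-1)^{|A|}\prod_{C}\Bigl|\bigcap_{v\in C}L(v)\Bigr|,
\]
where $C$ ranges over the vertex sets of the components of the spanning subgraph $(V(G),A)$. When all lists are equal, every factor equals $k$ and we recover $P(G,k)$. The plan is to write $P(G,L)-P(G,k)$ as a signed sum and to show it is nonnegative for $k> n^{10}$.

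The first step is a reduction. If some edge $uv$ has $L(u)\cap L(v)=\emptyset$, delete that edge; this leaves $P(G,L)$ unchanged, and I would proceed by induction on the number of edges, using that $P(G-e,k)\ge P(G,k)$ for $k\ge n$. More generally, I would measure how far $L$ is from constant by the quantity $D=\sum_{uv\in E(G)}|L(u)\setminus L(v)|$, which is zero exactly when lists are constant on components.

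The core estimate proceeds as follows. Group the terms of the expansion by the structure of $A$. Each term with $|A|=j$ has a product of size at most $k^{n-j}$. For single edges, the contribution $-\sum_{uv}|L(u)\cap L(v)|k^{n-2}$ exceeds its counterpart $-|E|k^{n-1}$ by exactly $D\,k^{n-2}$. This gain must dominate all the possible losses coming from larger $A$. For those, I would bound $\bigl|\prod_C|\cap L| - k^{\#C}\bigr|$ by roughly $D\cdot k^{n-|A|-1}$ times a combinatorial factor, using that an intersection over a connected set $C$ loses at most the sum of the pairwise losses along a spanning tree of $C$. Summing over the at most $2^{|E|}$ sets $A$ naively gives a factor that is exponential in $n$, which is too large.

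To avoid this, I would not expand all the way. Instead I would stop the deletion--contraction recursion (equivalently, the broken-circuit expansion) after a bounded number of steps, say $10$, and use the fact that for $k\ge n$ the remaining colorings are controlled by counting arguments such as $P(H,L)\ge \prod_v(k-\deg_{<}(v))$. This is where the polynomial $n^{10}$ enters: each of the boundedly many levels contributes at most $n^{2}$ choices of an edge or vertex pair.

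The main obstacle is this truncation. One must show that the first-order gain $D k^{n-2}$ beats the second- and higher-order error terms, each of which is at most $D\cdot\mathrm{poly}(n)\,k^{n-3}$. The difficulty is making the $\mathrm{poly}(n)$ explicit with exponent at most $10$ while keeping the lower bounds on partial list-coloring counts valid. I would first try to handle the case where $L$ differs from a constant assignment at only a few vertices, and then argue that this case is extremal by a local exchange: replacing lists one vertex at a time never increases the deficit once $k>n^{10}$.
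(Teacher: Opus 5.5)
The paper gives no proof of this statement. It is quoted from \cite{Thom}, and within the paper it is superseded: Theorem~\ref{main1}, together with the path/cycle case, gives $\tau(G)\le\max\{\lceil 23.41\Delta(G)\rceil,1\}\le |V(G)|^{10}+1$. The bound $\max\{|E(G)|-1,1\}$ of \cite{Dong22} would also suffice.

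Judged on its own, your proposal is a plan rather than a proof. The ingredients you isolate are the right ones: the expansion (\ref{PGL}), the exact first-order gain $Dk^{n-2}$ from single edges, and the spanning-tree defect bound of Lemma~\ref{wang}. But the decisive step, controlling all higher-order terms, is only announced, and the two devices you offer for it are not arguments. The ``local exchange'' claim, that changing lists one vertex at a time never increases the deficit, is a monotonicity statement at least as strong as the theorem itself. A greedy lower bound such as $\prod_v(k-\deg_<(v))$ is one-sided, whereas the remainder of a truncated signed deletion--contraction recursion must be bounded in absolute value to precision $Dk^{n-2}$.

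More fundamentally, you misdiagnose the obstacle. Your premise that a term with $|A|=j$ has size at most $k^{n-j}$ is false. The term has $q(A)$ factors, and $q(A)=n-j$ only when $A$ is a forest; the three edges of a triangle already give a term of order $k^{n-2}$. In $K_n$ there are $2^{\Theta(n^2)}$ connected spanning edge sets, so summing the terms of (\ref{PGL}) in absolute value cannot succeed for $k$ polynomial in $n$.

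The missing idea is sign cancellation. Whitney's broken-circuit involution toggles an edge whose endpoints are already joined by a path in $A$. It therefore preserves the component partition, and hence the list weight $\prod_C|\bigcap_{v\in C}L(v)|$. So (\ref{PGL}) may be restricted to edge sets containing no broken circuit, and these are forests; this is the mechanism behind (\ref{eq:c-tree}).

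Once only forests remain, the $2^{|E|}$ count is harmless. For a forest $A$ with $|A|=j$, Lemma~\ref{wang} applied to each component gives $0\le k^{n-j}-\prod_C|\bigcap_{v\in C}L(v)|\le k^{n-j-1}\sum_{e\in A}\alpha_L(e)$. At most $\binom{|E|-1}{j-1}$ such $A$ contain a given edge $e$, so summing over $j\ge 2$ yields
\[
P(G,L)-P(G,k)\;\ge\; Dk^{n-2}\Bigl(2-\bigl(1+\tfrac{1}{k}\bigr)^{|E|-1}\Bigr)\;\ge\;0
\qquad\text{whenever } k\ge \frac{|E|-1}{\ln 2}.
\]
This threshold is already far below $n^{10}$. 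It is, in crude form, the edge-subset route of \cite{wangwei,Dong22}, and it needs no truncation or exchange step.
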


In 2017, Wang, Qian and Yan~\cite{wangwei} improved Theorem~\ref{PLCG-Thom} by introducing and applying expansion formulas of the chromatic polynomial and the list-color function in terms of edge subsets.

	\begin{theorem}[\cite{wangwei}]\label{PLCG-wangg}
For any graph $G$, $\tau(G)\le\frac{|E(G)|-1}{\log(1+\sqrt{2})}+1$.
	\end{theorem}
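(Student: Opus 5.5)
Following Wang, Qian and Yan, I would compare $P(G,L)$ and $P(G,k)$ through their expansions over edge subsets. For a $k$-assignment $L$ and $A\subseteq E(G)$, let $c(A)$ be the number of components of the spanning subgraph $(V(G),A)$. For each component $C$, let $\lambda_L(C)=|\bigcap_{v\in V(C)}L(v)|$. Inclusion--exclusion over the set of monochromatic edges gives
\[
P(G,L)=\sum_{A\subseteq E(G)}(-1)^{|A|}\prod_{C}\lambda_L(C),\qquad P(G,k)=\sum_{A\subseteq E(G)}(-1)^{|A|}k^{c(A)}.
\]
Hence $P(G,L)-P(G,k)=\sum_A(-1)^{|A|}\bigl(\prod_C\lambda_L(C)-k^{c(A)}\bigr)$. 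Every term vanishes unless some component of $A$ has vertices with differing lists. The goal is to show this sum is $\ge 0$ once $k\ge \frac{|E(G)|-1}{\log(1+\sqrt2)}+1$.

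\textbf{Step 1.} Group the edge sets $A$ by the partition $\pi$ of $V(G)$ they induce into components. Then
\[
P(G,L)=\sum_{\pi}\mu(\pi)\prod_{B\in\pi}\lambda_L(B),
\]
where $\mu(\pi)=\sum_{A\mapsto\pi}(-1)^{|A|}$ is a product of connected-subgraph Möbius-type coefficients. The analogous formula holds for $P(G,k)$ with $\lambda$ replaced by $k$.

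\textbf{Step 2.} Reduce to the edges whose endpoint lists differ. Let $E'$ be the set of edges $uv$ with $L(u)\neq L(v)$. The leading deficit comes from $A$ consisting of one edge $e\in E'$: the single-edge term contributes $-(|L(u)\cap L(v)|-k)\cdot k^{n-2}\ge k^{n-2}$ per such edge. This is a positive gain of order $k^{n-2}$ times the total list discrepancy $\sum_{e\in E'}(k-|L(u)\cap L(v)|)$.

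\textbf{Step 3.} Bound all remaining terms (those with $|A|\ge 2$ meeting the discrepancy) in absolute value by the same discrepancy measure times $k^{n-2}$, times a factor that is small when $k$ is large relative to $|E|$.

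For Step 3, fix a first discrepant edge $e$ and charge each remaining $A$ to it. Since $\lambda_L(C)\le k$, and the drop in the product is at most the sum of drops, the sum over $A\ni e$ with $|A|=j$ is bounded by $\binom{|E|-1}{j-1}k^{n-j-1}$ times the discrepancy of $e$. Summing over $j\ge2$ gives a total of order $k^{n-2}\bigl((1+1/k)^{|E|-1}-1\bigr)$.

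This naive bound needs a sharper estimate to achieve the constant $\log(1+\sqrt2)$. The natural refinement pairs $A$ with $A\triangle\{f\}$ for edges $f$ that do not change the partition, using broken-circuit-style cancellation. Requiring the remaining geometric-type sum to be below the main gain then leads to the inequality $(1+1/(k-1))^{|E|-1}<1+\sqrt2$. This holds exactly when $k-1\ge(|E|-1)/\log(1+\sqrt2)$, since $\log(1+1/x)<1/x$.

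The main obstacle is Step 3, in particular getting the constant $1+\sqrt2$. That constant presumably arises from a quadratic inequality $x^2-2x-1\le 0$, obtained when balancing a second-order error term against the first-order gain. I would first try a two-level charging: edges in $E'$ give the gain, and each $A$ is charged to its lexicographically first $E'$-edge. I would then verify that the induced bound is $\bigl(r^2-1\bigr)/2<1$ with $r=(1+1/(k-1))^{|E|-1}$, which is equivalent to $r<\sqrt3$. If that fails, I would adjust the pairing until the threshold becomes $r<1+\sqrt2$.
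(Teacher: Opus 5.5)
The paper gives no proof of this statement; it only quotes it from Wang, Qian and Yan. So your proposal has to stand on its own, and it does not. Step~3 carries all the content of the theorem, and it is left as a plan. The part you do write down is also incorrect.

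\textbf{The naive bound fails for sets containing cycles.} Your estimate gives an edge set $A$ with $|A|=j$ the weight $k^{n-j-1}$. But $c(A)\ge n-j$, with equality only when $A$ is a forest. For a triangle $A$ through $e$, with $C$ its vertex set, the drop is $k^{n-3}\bigl(k-\lambda_L(C)\bigr)$, not something of order $k^{n-4}$. So the bound fails for every $A$ containing a cycle, and the sum of absolute values over all $A$ is not controlled by $(1+1/k)^{|E|-1}$. The broken-circuit pairing, which you mention only as a later refinement, is needed first. Fix an edge order. The standard sign-reversing involution on sets containing a broken circuit toggles an edge whose ends are already joined inside $A$. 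It therefore preserves the vertex partition, and hence $\prod_C\lambda_L(C)$. So both $P(G,L)$ and $P(G,k)$ may be summed over NBC sets $F$ only, and these are forests with $c(F)=n-|F|$.

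\textbf{Where $1+\sqrt2$ actually comes from.} Even after this fix, bounding every term with $|F|\ge2$ in absolute value only gives $(1+1/k)^{|E|-1}\le 2$, a threshold near $(|E|-1)/\log 2$. The missing idea is the sign of each term.
\begin{itemize}
\item For an NBC forest $F$, $k^{n-|F|}-\prod_C\lambda_L(C)\ge0$. So its contribution $(-1)^{|F|+1}\bigl(k^{n-|F|}-\prod_C\lambda_L(C)\bigr)$ to $P(G,L)-P(G,k)$ is nonnegative when $|F|$ is odd.
\item Keep only the single edges among the odd terms; they contribute $k^{n-2}D$, where $D=\sum_e\alpha_L(e)$.
\item For even $|F|=j$, use the telescoping bound $k^{c}-\prod_{i=1}^{c}a_i\le k^{c-1}\sum_i(k-a_i)$ for $0\le a_i\le k$. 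Combine it with Lemma~\ref{wang} on the trees of $F$, namely $k-\lambda_L(C)\le\sum_{f\in F\cap E(C)}\alpha_L(f)$. This bounds the loss from $F$ by $k^{n-j-1}\sum_{f\in F}\alpha_L(f)$.
\item Summing over all even $F$, the total loss is at most $k^{n-2}D\sum_{i\ \mathrm{odd}}\binom{|E|-1}{i}k^{-i}\le k^{n-2}D\sinh\bigl((|E|-1)/k\bigr)$.
\end{itemize}
Hence $P(G,L)\ge P(G,k)$ as soon as $\sinh\bigl((|E|-1)/k\bigr)\le1$, that is, $(|E|-1)/k\le\log(1+\sqrt2)$. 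This is because $\sinh y=1$ means $\exp(2y)-2\exp(y)-1=0$. That is the quadratic you were guessing at, but it comes from discarding the odd-size terms, not from balancing a second-order error against the first-order gain.

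Your target inequality $(1+1/(k-1))^{|E|-1}<1+\sqrt2$ is not derived from anything. Likewise, the plan to adjust the pairing until the threshold becomes $r<1+\sqrt2$ is reverse-engineered from the answer. As it stands, the proposal does not prove the theorem.
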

	
In 2023, Dong and Zhang~\cite{Dong22} obtained a further improvement by refining the techniques in~\cite{wangwei}.

\begin{theorem}[\cite{Dong22}]
For any graph $G$, $\tau(G)\le \max\{|E(G)|-1,1\}$.
\end{theorem}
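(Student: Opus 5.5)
The plan is to follow the edge-subset expansion approach of \cite{wangwei}, as refined in \cite{Dong22}. For any $k$-assignment $L$ and any $A\subseteq E(G)$, inclusion--exclusion over monochromatic edges gives
\[
P(G,L)=\sum_{A\subseteq E(G)}(-1)^{|A|}\prod_{C}|\textstyle\bigcap_{v\in C}L(v)|,
\]
where $C$ ranges over the vertex sets of the components of the spanning subgraph $(V(G),A)$. For the trivial assignment $L_0$ each factor equals $k$, so we recover Whitney's expansion of $P(G,k)$. The task is to show $P(G,L)-P(G,k)\ge 0$ for every $k$-assignment $L$ once $k\ge |E(G)|-1$ (and $|E(G)|\ge 2$); the degenerate cases $|E(G)|\le 1$ are checked by hand.

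\textbf{Reduction to broken-circuit sets.} First restrict the expansion, as in the Whitney broken-circuit theorem, by fixing a linear order on $E(G)$. Terms coming from edge sets that contain a broken circuit cancel in pairs for $L_0$. For general $L$ they need not cancel, but one can group them by the closure (the partition of $V(G)$ they induce). For a fixed partition $\pi$ into connected parts, the signed sum of $(-1)^{|A|}$ over all $A$ inducing exactly $\pi$ equals $\prod_C \mu(C)$, where $\mu(C)=(-1)^{|C|-1}$ times the number of NBC spanning trees of $G[C]$. This gives
\[
P(G,L)=\sum_{\pi}\prod_{C\in\pi}(-1)^{|C|-1}t_C\,|\textstyle\bigcap_{v\in C}L(v)|,
\]
where $\pi$ ranges over partitions of $V(G)$ into sets inducing connected subgraphs and $t_C\ge 0$ counts the NBC trees of $G[C]$.

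\textbf{Comparing with $L_0$.} Write $|\bigcap_{v\in C}L(v)|=k-d_C$ with $d_C\ge 0$. Expanding the products, $P(G,L)-P(G,k)$ becomes a sum over $\pi$ and over subfamilies of non-singleton blocks whose deficiencies are used. The key structural fact is monotonicity: $d_C\le d_{C'}$ whenever $C\subseteq C'$, and each deficiency is controlled by deficiencies along edges, since $d_C\le\sum_{uv\in E(G[C])} d_{\{u,v\}}$ (a union bound on missing colors along a spanning tree). The dominant positive contribution comes from partitions with one edge block $\{u,v\}$ and all other blocks singletons. This term is $d_{uv}k^{n-2}$, contributed with sign $(-1)(-1)=+$.

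\textbf{Main obstacle.} The hardest step is bounding all remaining terms, charged edge by edge, by $d_{uv}k^{n-2}$. Each other term involves at least one deficient block $C\ni u,v$ and has total $k$-degree at most $n-2$ ($n-|C|$ from the other factors, times other deficiencies, each at most $k$). So it suffices to show
\[
\sum_{C\ni uv,\ |C|\ge 3}t_C\,k^{n-|C|}\cdot(\text{counting factors})<k^{n-2}.
\]
The number of NBC forests of size $j$ containing a fixed edge is at most $\binom{m-1}{j-1}$. Summing $\binom{m-1}{j-1}k^{-(j-1)}$ then gives a factor like $(1+1/k)^{m-1}-1$, and this must be compensated by the alternating cancellation from the grouping. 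I expect the refinement of \cite{Dong22}, which pairs forests differing in the largest edge, to make this sum at most $1$ exactly when $k\ge m-1$. I will first try a sign-reversing involution on NBC forests containing $uv$ that leaves only terms with $k\ge m-1$ controlling the ratio. If that fails, the fallback is the weaker bound of Theorem~\ref{PLCG-wangg}.
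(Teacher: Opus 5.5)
The paper gives no proof of this statement (it is quoted from \cite{Dong22}), so your proposal has to stand on its own. It has a genuine gap: it stops exactly where the real work begins.

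The expansion and the leading term are correct; your partition form is just Lemma~\ref{lem2} regrouped. One correction matters for your plan. Whitney's involution works for every $k$-assignment $L$, because toggling the minimal edge of a broken circuit does not change the components of $(V,A)$, and hence does not change $\prod_C|\bigcap_{v\in C}L(v)|$. So the non-NBC terms \emph{do} cancel. Writing $d_e=d_{\{u,v\}}=\alpha_L(e)$ for $e=uv$,
\[
P(G,L)-P(G,k)=\sum_{F}(-1)^{|F|}\Bigl(\prod_{C}(k-d_C)-k^{\,n-|F|}\Bigr),
\]
where $F$ ranges over nonempty NBC forests and $C$ over the vertex sets of the components of $(V,F)$. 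Since $0\le k-d_C\le k$, each bracket is $\le 0$, so every forest contributes with the fixed sign $(-1)^{|F|+1}$. No exact cancellation is left for a sign-reversing involution on NBC forests to find. Forests of equal size have equal signs. Pairing forests of consecutive sizes helps only together with an inequality between their brackets, which you do not supply.

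This is the whole difficulty, because at $k=m-1$ there is no slack. Each edge lies in at most $m-1$ two-edge NBC forests. For $F=\{e,f\}$ the bracket has absolute value at most $k^{n-3}(d_e+d_f)$ (by Lemma~\ref{wang} if $e,f$ share a vertex, directly if they are disjoint). So forests with $|F|\le 2$ contribute at least $(k-m+1)k^{n-3}\sum_e d_e$, and this lower bound is already $0$ at $k=m-1$.

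Consequently, any estimate that bounds the forests with $|F|\ge 3$ in absolute value must fail at the stated threshold. Your own computation shows this: $(1+\frac1{m-1})^{m-1}-1>1$ for $m\ge 3$ (tending to $\exp(1)-1$), and this route only yields a threshold of order $(m-1)/\ln 2$.

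The missing step is a proof that $\sum_{|F|\ge 3}(-1)^{|F|}\bigl(\prod_C(k-d_C)-k^{\,n-|F|}\bigr)\ge 0$ for $k\ge m-1$, or an equivalent sign-sensitive argument. In other words, the nonnegative odd-size terms must absorb the nonpositive even-size ones. This is delicate. Charging $F$ to $F\setminus\{g\}$ can fail: if $g$ joins blocks $C_1,C_2$ into $C$, one may have $d_{C_1}=d_{C_2}=0<d_C=d_g$. So the deleted edge must be chosen carefully, and the bound $d_C\le d_{C_1}+d_{C_2}+d_g$ must be combined with careful multiplicity counts.

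The proposal only says you ``expect'' this to work, and the fallback to Theorem~\ref{PLCG-wangg} proves a weaker statement. Also, your displayed target keeps only blocks $C\ni u,v$ with $|C|\ge 3$. It omits forests in which $uv$ is itself a block but another block is nontrivial (e.g.\ two disjoint edges), and these also contribute errors of order $d_{uv}k^{n-3}$.
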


At the end of \cite{Dong22}, Dong and Zhang conjectured that \(\tau(G)\) admits an upper bound linear in \(|V(G)|\), or even in the maximum degree \(\Delta(G)\). In this paper, we establish a bound linear in \(\Delta(G)\), thereby confirming the stronger form of this conjecture.

To establish the desired upper bound, it suffices to consider connected graphs. Indeed, if \(G\) has components \(G_1,\ldots,G_c\), then

$$
P(G,k)=\prod_{i=1}^{c}P(G_i,k)
\qquad\text{and}\qquad
P_{\ell}(G,k)
=\prod_{i=1}^{c}P_{\ell}(G_i,k).
$$
Consequently,
$$
\tau(G)=\max_{1\leq i\leq c}\tau(G_i).
$$

Moreover, every connected graph with maximum degree at most two is a path, including the single-vertex graph, or a cycle. For any graph $G$ which is either a path or a cycle, $P_{\ell}(G,k)=P(G,k)$ for every $k\in\N$
(see~\cite{Kirov2016,kosto}).
It therefore remains to consider connected graphs with maximum degree at least three, for which we prove the following quantitative result.

\begin{theorem}\label{main1}
Let $G$ be a connected graph with maximum degree $\Delta\ge 3$. For any $k$-assignment $L$ of $G$ with  $k\ge 23.41\Delta$,
\aln{main1-e1}
{
P(G,L) &\ge P(G,k)
 \exp\left(
\frac{ 10^{-6}}{k^2}
 \sum_{uv\in E(G)}|L(u)\setminus L(v)|\right).
}
Hence $P(G,k)=P_{\ell}(G,k)$ for every integer $k$ with $k\ge 23.41 \Delta$.
\end{theorem}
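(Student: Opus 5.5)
We prove \eqref{main1-e1} by a vertex-by-vertex telescoping (a ``ratio'' or cluster-expansion style argument) rather than through the global edge-subset expansion of \cite{wangwei,Dong22}. The reason is that the edge-subset expansion sums over all subsets of $E(G)$ and naturally produces thresholds in terms of $|E(G)|$, whereas a local argument should only see neighbourhoods of bounded size $\Delta$.

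\textbf{Step 1: telescoping.} Order the vertices $v_1,\dots,v_n$ and let $G_i=G[\{v_1,\dots,v_i\}]$. Write
\[
P(G,L)=\prod_{i}\frac{P(G_i,L)}{P(G_{i-1},L)}
\quad\text{and}\quad
P(G,k)=\prod_i \frac{P(G_i,k)}{P(G_{i-1},k)}.
\]
The ratio $P(G_i,L)/P(G_{i-1},L)$ equals $|L(v_i)|$ minus the expected number of colours of $L(v_i)$ that are blocked by the earlier neighbours of $v_i$, under the uniform measure on $L$-colourings of $G_{i-1}$. In the same way,
\[
\frac{P(G_i,k)}{P(G_{i-1},k)}=k-\mathbb{E}\bigl|\{\theta(u):u\in N(v_i)\cap V(G_{i-1})\}\bigr|.
\]
It therefore suffices to compare, factor by factor, these expected blocked sets.

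\textbf{Step 2: approximate independence.} For $k\ge c\Delta$ we show that in the uniform $L$-colouring of any induced subgraph, the marginal law of each vertex is close to uniform on its list. Moreover, the joint law of a bounded set of vertices, such as the neighbourhood of $v_i$, is close to a product measure. The tool is a Dobrushin-type or contraction argument. Conditioning on everything outside a vertex $u$ leaves at least $k-\Delta$ available colours, so $\Pr(\theta(u)=c)\le 1/(k-\Delta)$. Changing the colour of one neighbour moves the conditional law of $u$ by total variation at most about $1/(k-\Delta)$. The total influence is then at most $\Delta/(k-\Delta)<1$, which gives correlation decay and the closeness estimates. Explicit constants from this step are where the $23.41$ arises.

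\textbf{Step 3: gain from list mismatch.} For an edge $uv_i$, the colours $\theta(u)\in L(u)\setminus L(v_i)$ do not block anything at $v_i$. Hence each earlier neighbour $u$ blocks a colour of $L(v_i)$ with probability only about $|L(u)\cap L(v_i)|/k$, instead of $1$ in the $[k]$ case. Collisions among the neighbours' colours reduce the blocked count in both models, and by Step 2 these collision corrections agree up to $O(\Delta^2/k^2)$-type errors.

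This yields
\[
\frac{P(G_i,L)}{P(G_{i-1},L)}\ \ge\ \frac{P(G_i,k)}{P(G_{i-1},k)}
+\frac{c_1}{k}\sum_{u}|L(u)\setminus L(v_i)| - \text{error},
\]
where the sum is over earlier neighbours $u$ of $v_i$. Dividing by the factor, which is at most $k$, and using $1+x\ge e^{x/2}$ gives the exponential gain with some constant over $k^2$. Choosing the orientation properly (or averaging over both endpoints of each edge) covers every edge.

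\textbf{Main obstacle.} The hardest step is making the error in Step 3 truly dominated by the gain. The gain is proportional to the mismatch, so when $L$ is nearly the trivial assignment the error must also vanish with the mismatch. I would handle this by coupling the $L$-colouring measure with the $[k]$-colouring measure and bounding the discrepancy in the blocked-count expectation by a sum of mismatches weighted by influences that decay geometrically (ratio $\Delta/(k-\Delta)\le 1/22$) with distance. That weighted sum is then absorbed into the gain, and only a small constant such as $10^{-6}$ survives.

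The final claim follows immediately. The exponential factor is at least $1$, so $P(G,L)\ge P(G,k)$ for every $k$-assignment $L$, while $P_\ell(G,k)\le P(G,k)$ by \eqref{listcolor}.
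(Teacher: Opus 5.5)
Your route (telescoping vertex by vertex and comparing expected numbers of blocked colours via Dobrushin-type correlation decay) is genuinely different from the paper's. It has a gap exactly at the step that carries the theorem, and it derives neither constant.

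\textbf{The error is not tied to the mismatch.} The statement is quantitative: the gain must be at least $10^{-6}D/k^2$, with $D=\sum_{e\in E}\alpha_L(e)$, for \emph{every} $k\ge 23.41\Delta$. So every error term must be bounded by an explicit constant less than one times $D/k^2$. As written, Step~3 produces an additive error of order $\Delta^2/k^2$ per step that does not vanish with the mismatch. Summed over the $n$ steps it is of order $n\Delta^2/k^2$, while the total gain is of order $D/k$, which can be as small as $1/k$.

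\textbf{The proposed fix does not work as stated.} You acknowledge the problem under ``main obstacle'', but the remedy is not carried out, and in its naive form it fails. The uniform measures $\mu^L$ (on $L$-colourings) and $\mu^k$ (on $k$-colourings) live on different colour sets. Their single-site conditional laws, uniform on $L(w)\setminus\theta(N(w))$ versus uniform on $[k]\setminus\theta(N(w))$, differ by roughly $|L(w)\triangle[k]|/k$. Under no fixed identification of colours is this controlled by nearby mismatches. For example, take $L(v_j)=\{j,\dots,j+k-1\}$ along a long path: every edge has $\alpha_L=1$, yet the lists drift through disjoint colour sets. So a Dobrushin comparison theorem gives nothing proportional to $D$.

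\textbf{What would be needed.} To make your strategy work you would need:
\begin{itemize}
\item locally chosen identifications of colours around each $v_i$;
\item control of the boundary effect of that choice;
\item a global count showing that the total influence of each edge's mismatch, summed over all later steps $i$, is at most a small explicit multiple of $\alpha_L(e)/k$.
\end{itemize}
Since far-away mismatches change $\mu^L$ near $v_i$, this absorption cannot be done factor by factor. None of this is supplied, and the constants $23.41$ and $10^{-6}$ are said to ``arise'' rather than derived.

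\textbf{What the paper does instead.} The missing idea is a representation in which every error term carries a mismatch factor automatically. The paper writes $P(G,k)=k^n\phi_\Ccal(\w_P)$ and $P(G,L)=k^n\phi_\Ccal(\w_L)$ as sums over families of pairwise disjoint connected vertex sets (Lemmas~\ref{lem1} and~\ref{lem2}). It then interpolates $\w_t(S)=\w_P(S)\bigl(1-t(1-\frac1k|\bigcap_{v\in S}L(v)|)\bigr)$ and studies $\varphi'(t)/\varphi(t)$. The argument has four ingredients:
\begin{itemize}
\item Each polymer $S$ enters the derivative multiplied by $1-\frac1k|\bigcap_{v\in S}L(v)|\le\frac1k\sum_{e\in E(T)}\alpha_L(e)$ for any spanning tree $T$ of $G[S]$ (Lemma~\ref{wang}). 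This depends only on the lists inside $S$, so no global colour identification is needed.
\item Combined with the edge-rooted tree count $N_e(s)\le 2\Delta^{s-2}e^{s-2}$, this gives Proposition~\ref{prop1}: the defect-weighted polymers of size $s$ sum to at most $2D\Delta^{s-2}e^{s-2}/k^s$.
\item A Koteck\'y--Preiss-type induction (Lemma~\ref{lem:ratio}) gives $\bigl|\phi_{\Ccal\setminus N_\Ccal[S]}(\w_t)/\phi_\Ccal(\w_t)-1\bigr|\le e^{\lambda|S|}-1$.
\item The two-vertex polymers contribute exactly the main term $D/k^2$, and the explicit system (\ref{conditions}) yields $r=e^{1+\lambda}/x<23.41$ and $\epsilon=10^{-6}$.
\end{itemize}
Your probabilistic route only becomes a proof once an analogue of this mismatch localisation, with explicit constants, is established.
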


By Theorem~\ref{main1}, we improve the general upper bound on $\tau(G)$ from $ \max\{|E(G)|-1,1\}$ to $\max\{\lceil 23.41\Delta(G)\rceil,1\}$,
thereby replacing a worst-case $O(|V(G)|^2)$ sufficient bound by an $O(|V(G)|)$ bound.

To prove Theorem~\ref{main1}, we shall introduce expansion formulas in terms of vertex subsets for the chromatic polynomial and the list-color function in Section~\ref{sec2}. Then by further developing some results to compare the two formulas, we give the proof of Theorem~\ref{main1} in Section~\ref{sec3}. 

\section{Preliminary results
\label{sec2}}
In this section, we shall establish some preparatory results for proving Theorem~\ref{main1}.

Throughout the remainder of the paper, we assume that $k$ is a fixed positive integer,
$G=(V,E)$ is a connected graph with maximum degree $\Delta\ge 3$, and $L$ is a $k$-assignment of $G$. Let
$$
 \Ccal=\{S\subseteq V: |S|\ge2\text{ and }G[S]\text{ is connected}\}.
$$
For any $S\in\Ccal$, define
\aln{eq:c-beta}
 {
 c(S)=
 \sum_{\substack{A\subseteq E(G[S])\\(S,A)\text{ connected}}}
 (-1)^{|A|}.
 }
 For any $\cala\subseteq \Ccal$, 
 let $2^{\cala}_0$ be the family 
 of subsets $\Fcal$ of $\cala$ 
 with the property that 
 $S_1\cap S_2=\emptyset$
 for each pair of distinct members $S_1,S_2$ of $\Fcal$. 
Obviously, 
$\emptyset\in 2^{\cala}_0$,
and $\{S\}\in 2^{\cala}_0$
for every $S\in \cala$.

Moreover, let $\w_P$ and $\w_L$ be real-valued weight functions on $\Ccal$ such that for any $S\in\Ccal$,
$$
 \w_P(S)=\frac{c(S)}{k^{|S|-1}},
 \qquad
 \w_L(S)=\frac{c(S)}{k^{|S|}}
 \left|\bigcap_{v\in S}L(v)\right|.
$$
 Sokal~\cite{Sokal} established an expansion formula for the chromatic polynomial in terms of $\w_P$.
\begin{lemma}[Proposition 2.1 of~\cite{Sokal}]\label{lem1}
Let $G$ be a connected graph of order $n$. Then 
\aln{ep1-1}
{
P(G,k)= k^n
\sum_{\Fcal\in 2^{\Ccal}_0}
 \prod_{S\in\Fcal} \w_P(S),
}
where the product over an empty set is understood to be one.
\end{lemma}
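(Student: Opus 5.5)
The plan is to start from Whitney's subgraph expansion of the chromatic polynomial and regroup its terms according to the vertex sets of the nontrivial connected components. Recall Whitney's theorem: for any graph $G$ on $n$ vertices,
\[
P(G,k)=\sum_{A\subseteq E}(-1)^{|A|}k^{\kappa(A)},
\]
where $\kappa(A)$ is the number of connected components of the spanning subgraph $(V,A)$. It follows by inclusion--exclusion over the set of edges whose endpoints receive equal colors: for a fixed $A$, the $k$-colorings that are constant on each component of $(V,A)$ number exactly $k^{\kappa(A)}$. I would quote this, or derive it in two lines.

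Next I would set up a bijection. To each $A\subseteq E$ associate the family $\Fcal_A$ of vertex sets of those components of $(V,A)$ that have at least two vertices. Each such set $S$ induces a connected subgraph of $G$, since $(S,A\cap E(G[S]))$ is already connected, so $S\in\Ccal$. Distinct components are disjoint, so $\Fcal_A\in 2^{\Ccal}_0$. Moreover, every edge of $A$ lies inside one such $S$, and $A_S:=A\cap E(G[S])$ makes $(S,A_S)$ connected.

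Conversely, take any $\Fcal\in 2^{\Ccal}_0$ together with, for each $S\in\Fcal$, an edge set $A_S\subseteq E(G[S])$ such that $(S,A_S)$ is connected. Then $A=\bigcup_{S\in\Fcal}A_S$ is a subset of $E$ whose nontrivial components are exactly the members of $\Fcal$: the sets $S$ are pairwise disjoint, and the vertices outside $\bigcup\Fcal$ are isolated in $(V,A)$. Hence $A\mapsto(\Fcal_A,(A_S)_{S\in\Fcal_A})$ is a bijection onto the set of all such data.

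Under this bijection, $|A|=\sum_{S\in\Fcal}|A_S|$. Also
\[
\kappa(A)=\Bigl(n-\sum_{S\in\Fcal}|S|\Bigr)+|\Fcal| = n-\sum_{S\in\Fcal}(|S|-1),
\]
so $k^{\kappa(A)}=k^n\prod_{S\in\Fcal}k^{-(|S|-1)}$. Substituting into Whitney's formula and summing first over $\Fcal$, then independently over the choices of $A_S$ for each $S\in\Fcal$, the inner sum factorizes as
\[
\prod_{S\in\Fcal}\Bigl(k^{-(|S|-1)}\sum_{\substack{A_S\subseteq E(G[S])\\ (S,A_S)\text{ connected}}}(-1)^{|A_S|}\Bigr)=\prod_{S\in\Fcal}\frac{c(S)}{k^{|S|-1}}=\prod_{S\in\Fcal}\w_P(S).
\]
This yields (\ref{ep1-1}). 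The term $\Fcal=\emptyset$ corresponds to $A=\emptyset$ and contributes $k^n$, consistent with the empty-product convention.

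The only step needing care is verifying the bijection, in particular that the decomposition of $A$ into the pieces $A_S$ is unique and that the component count is computed correctly. Everything else is bookkeeping. Connectedness of $G$ is not actually used; the identity holds for any graph.
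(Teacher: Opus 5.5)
Your proof is correct and takes essentially the same route as the paper. Both start from Whitney's expansion $P(G,k)=\sum_{A\subseteq E}(-1)^{|A|}k^{q(A)}$, regroup the edge sets $A$ by the vertex sets of the components of $(V,A)$, and factor the inner sum as $\prod_{S\in\Fcal}c(S)k^{1-|S|}$. The only cosmetic difference is that the paper first sums over partitions of $V$ into connected blocks (singletons included, each contributing a factor $1$) and then discards the singletons, whereas you index directly by the nontrivial components, which makes the underlying bijection explicit.
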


For the sake of completeness, we give a proof of Lemma~\ref{lem1} below.

\begin{proof}
Let $G=(V,E)$ be a connected graph of order $n$. 
It is well known from \cite{Whi1932} that
\aln{PGk}
{P(G,k)=\sum_{A\subseteq  E}(-1)^{|A|}k^{q(A)},}
where $q(A)$ is the number of components of the graph $(V,A)$. 

Assume that $V$ is also a partition of $V$. Then we have
\aln{proof}{
P(G,k)=&\sum_{A\subseteq  E}(-1)^{|A|}k^{q(A)}\nonumber\\
=&\sum_{s\ge 1}\sum_{\substack{A\subseteq  E\\q(A)=s}}(-1)^{|A|}k^{s}\nonumber\\
=&\sum_{s\ge 1}\sum_{\substack{A\subseteq  E\\q(A)=s}}\prod_{\substack{H~\text{is a component}\\ \text{of}~(V,A)}}(-1)^{|E(H)|}k\nonumber\\
=&\sum_{s\ge 1}\sum_{\substack{ V_1,V_2,\dots,V_s ~\text{partition}~V\\G[V_1], G[V_2],\dots,G[V_s]~\text{connected}}}\prod_{i=1}^s \sum_{\substack{E_i\subseteq E(G[V_i])
\\(V_i,E_i)~\text{connected}}} \left((-1)^{|E_i|}k\right)\nonumber\\
=&k^n\sum_{s\ge 1}\sum_{\substack{ V_1,V_2,\dots,V_s ~\text{partition}~V\\G[V_1], G[V_2],\dots,G[V_s]~\text{connected}}}\prod_{i=1}^s
\sum_{\substack{E_i\subseteq E(G[V_i])
		\\(V_i,E_i)~\text{connected}}}
 \left((-1)^{|E_i|}k^{1-|V_i|}\right)\nonumber\\
=&k^n\sum_{s\ge 1}\sum_{\substack{ V_1,V_2,\dots,V_s ~\text{partition}~V\\G[V_1], G[V_2],\dots,G[V_s]~\text{connected}}}\prod_{1\le i\le s\atop |V_i|\ge 2}k^{1-|V_i|}
\sum_{\substack{E_i\subseteq E(G[V_i])
\\(V_i,E_i)~\text{connected}}}
 \left((-1)^{|E_i|}\right)\nonumber\\
=&k^n\sum_{s\ge 1}\sum_{\substack{ V_1,V_2,\dots,V_s ~\text{partition}~V\\G[V_1], G[V_2],\dots,G[V_s]~\text{connected}}}\prod_{1\le i\le s\atop |V_i|\ge 2} \w_P(V_i)\nonumber\\
=&k^n\sum_{\Fcal \in 2^{\Ccal}_0}
 \prod_{S\in\Fcal} \w_P(S).
}
\end{proof}

Further, Wang, Qian and Yan obtained in~\cite{wangwei} the following extension of (\ref{PGk}) for list-colorings that
\aln{PGL}{P(G,L)=\sum_{A\subseteq  E} (-1)^{|A|}\prod_{\substack{H~\text{is a component}\\ \text{of}~(V,A)}}\left(
	\Abs{\bigcap_{v\in V(H)} L(v)}
	\right).}
Then by applying the same idea as in (\ref{proof}) to (\ref{PGL}), we can derive an analogous expansion formula for counting list-colorings in terms of $\w_L$.

\begin{lemma}\label{lem2}
Let $G$ be a connected graph of order $n$. If $L$ is a $k$-assignment of $G$, then
\aln{ep1-2}
{
P(G,L)=k^n
\sum_{\Fcal \in 2^{\Ccal}_0}
 \prod_{S\in\Fcal} \w_L(S).
}
\end{lemma}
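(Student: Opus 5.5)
We follow the proof of Lemma~\ref{lem1} line by line, starting from (\ref{PGL}) in place of (\ref{PGk}). The only change is the factor attached to each component. In (\ref{PGk}) every component contributes $k$. In (\ref{PGL}) a component $H$ of $(V,A)$ contributes $\left|\bigcap_{v\in V(H)}L(v)\right|$, which depends on $V(H)$.

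First, we group the terms of (\ref{PGL}) by the vertex partition that the components of $(V,A)$ induce. A subset $A\subseteq E$ determines a partition $V_1,\dots,V_s$ of $V$ into the vertex sets of its components. Each $G[V_i]$ is connected, and $A$ decomposes uniquely as $A=E_1\cup\cdots\cup E_s$ with $E_i\subseteq E(G[V_i])$ and $(V_i,E_i)$ connected. Conversely, every such partition together with such a choice of the $E_i$ gives exactly one $A$, because edges of $G$ between different parts are simply not chosen. Since $(-1)^{|A|}=\prod_i(-1)^{|E_i|}$, this bijection gives
\[
P(G,L)=\sum_{s\ge1}\sum_{\substack{V_1,\dots,V_s\text{ partition }V\\ G[V_i]\text{ connected}}}\prod_{i=1}^s\Bigl(\Bigl|\bigcap_{v\in V_i}L(v)\Bigr|\sum_{\substack{E_i\subseteq E(G[V_i])\\(V_i,E_i)\text{ connected}}}(-1)^{|E_i|}\Bigr).
\]

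Second, we factor out $k^n=\prod_i k^{|V_i|}$, so that part $V_i$ contributes $k^{-|V_i|}\left|\bigcap_{v\in V_i}L(v)\right|c(V_i)$, with $c(V_i)$ as in (\ref{eq:c-beta}). For $|V_i|\ge2$ this is exactly $\w_L(V_i)$. For a singleton $V_i=\{v\}$, the only admissible edge set is $E_i=\emptyset$, so the inner sum is $1$, and the factor is $k^{-1}|L(v)|=1$ because $|L(v)|=k$. This is the one point where the hypothesis that $L$ is a $k$-assignment is used.

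Third, we reindex. Partitions of $V$ into connected parts correspond bijectively to families $\Fcal\in 2^{\Ccal}_0$: take $\Fcal$ to be the set of parts of size at least $2$, and recover the partition by adding the remaining vertices as singletons. This yields (\ref{ep1-2}).

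The main point needing care is the bijection in the first step, namely that the product over components factorizes correctly once the list factor depends on the vertex set. Since $\left|\bigcap_{v\in V_i}L(v)\right|$ depends only on $V_i$ and not on $E_i$, it pulls out of the inner sum. The rest is bookkeeping identical to the proof of Lemma~\ref{lem1}.
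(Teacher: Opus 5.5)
Your proposal is correct and is essentially the paper's own argument. The paper proves Lemma~\ref{lem2} by running the computation in (\ref{proof}) on the expansion (\ref{PGL}) instead of (\ref{PGk}); you write that computation out explicitly, and you correctly identify the singleton factor $k^{-1}|L(v)|=1$ as the one place where the $k$-assignment hypothesis is needed.
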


Therefore, a natural strategy for proving Theorem~\ref{main1} is to compare $P(G,k)$ and $P(G,L)$ by (\ref{ep1-1}) and (\ref{ep1-2}). Following this approach, we further introduce several helpful lemmas in the following.

For any connected graph $H$, let $\ST(H)$ be the set of spanning trees of $H$ and let $\st(H)=|\ST(H)|$. 
By Whitney's broken cycle theorem~\cite{Whi1932} and (\ref{eq:c-beta}), for any $S\in\Ccal$, 
\aln{eq:c-tree}
{
 |c(S)|= \left| \sum_{\substack{A\subseteq E(G[S])\\(S,A)\text{ connected}}} (-1)^{|A|}\right|\le  \sum_{\substack{A\subseteq E(G[S])\\(S,A)\in \ST(G[S])}} 1=  \st(G[S]).
}
A more general version of (\ref{eq:c-tree}) was also given in~\cite{penrose,Sokal}.

Further, an upper bound for the number of subtrees with a fixed order and containing a particular vertex was determined in~\cite{Sokal}.
 \begin{lemma}[\cite{Sokal}]\label{lem:vertextrees}
For any $v\in V$ and $s\in \N$, let $N_v(s)$ be the number of trees $T$ in $G$ such that $|V(T)|=s$ and $v\in V(T)$. Then $N_v(1)=1$ and for $s\ge 2$,
\aln{eq2-6}{
N_v(s)=\sum_{
	S\in\Ccal: v\in S\atop |S|=s
}
\st(G[S]).
}
Moreover,
\aln{eq:vertex-tree}
{
N_v(s)
 \le\frac{(s\Delta)^{s-1}}{s!}
 \le\Delta^{s-1}\exp(s-1).
}
\end{lemma}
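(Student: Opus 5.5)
The plan is to prove the identity \eqref{eq2-6} by a direct bijection, and to obtain \eqref{eq:vertex-tree} by a labelled-tree (Cayley) counting argument followed by an elementary Stirling-type estimate.

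\textbf{The identity.} For $s=1$ the only tree is the single vertex $v$, so $N_v(1)=1$. For $s\ge2$, a tree $T$ in $G$ with $|V(T)|=s$ and $v\in V(T)$ determines $S=V(T)$. This set has $|S|=s\ge 2$, and $G[S]$ is connected because it contains $T$, so $S\in\Ccal$ with $v\in S$. Moreover $T$ is a spanning tree of $G[S]$. Conversely, every pair $(S,T)$ with $S\in\Ccal$, $v\in S$, $|S|=s$ and $T\in\ST(G[S])$ is such a tree. Grouping the trees by their vertex sets therefore gives \eqref{eq2-6}.

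\textbf{The first inequality.} I would double count pairs $(T,\lambda)$, where $T$ is a tree counted by $N_v(s)$ and $\lambda:V(T)\to[s]$ is a bijection with $\lambda(v)=1$. There are exactly $(s-1)!\,N_v(s)$ such pairs.
\begin{itemize}
\item Each pair yields an abstract labelled tree $\tau$ on $[s]$, namely the image of $T$ under $\lambda$.
\item It also yields an injective map $\varphi=\lambda^{-1}:[s]\to V$ with $\varphi(1)=v$ that sends edges of $\tau$ to edges of $G$.
\item The pair $(T,\lambda)$ is recovered from $(\tau,\varphi)$, so the assignment $(T,\lambda)\mapsto(\tau,\varphi)$ is injective.
\end{itemize}
By Cayley's formula there are $s^{s-2}$ choices of $\tau$. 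For a fixed $\tau$, root it at $1$ and process its vertices in order of distance from $1$. When a non-root vertex is processed, its image must be a neighbour of the already-chosen image of its parent. This leaves at most $\Delta$ choices at each step, hence at most $\Delta^{s-1}$ maps $\varphi$. Therefore
\[
(s-1)!\,N_v(s)\le s^{s-2}\Delta^{s-1},
\]
that is, $N_v(s)\le s^{s-1}\Delta^{s-1}/s!=(s\Delta)^{s-1}/s!$.

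\textbf{The second inequality.} It remains to show $s^{s-1}/s!\le e^{s-1}$. From $e^s=\sum_{j\ge0} s^j/j!\ge s^s/s!$ we get $s^{s-1}/s!\le e^{s}/s$. The right-hand side is at most $e^{s-1}$ whenever $s\ge e$, i.e. for $s\ge3$. The cases $s=1$ and $s=2$ give the values $1$ and $1$, which can be checked by hand.

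The only step needing care is the injectivity of $(T,\lambda)\mapsto(\tau,\varphi)$ together with the $\Delta^{s-1}$ count of embeddings. Both become routine once the tree is rooted at the vertex labelled $1$ and processed in breadth-first order.
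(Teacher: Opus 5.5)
Your proposal is correct. The identity is handled exactly as in the paper, by grouping the trees according to their vertex sets. The differences are in the two inequalities.

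For the first inequality, the paper simply cites Proposition 4.3(f) of Sokal. You instead give a self-contained proof. You double count pairs $(T,\lambda)$ against pairs $(\tau,\varphi)$, where $\tau$ is a labelled tree on $[s]$. Cayley's formula supplies $s^{s-2}$ choices of $\tau$, and the greedy breadth-first count supplies at most $\Delta^{s-1}$ root-preserving maps $\varphi$. This gives $(s-1)!\,N_v(s)\le s^{s-2}\Delta^{s-1}$, which is exactly $(s\Delta)^{s-1}/s!$. Your route makes the lemma independent of the external reference at the cost of a few lines.

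For the second inequality, the paper writes $s^{s-1}/s!=s^{s-2}/(s-1)!$ and uses AM--GM to get $s^{s-2}\le (s-1)^{s-1}$. It then applies $\exp(s-1)\ge (s-1)^{s-1}/(s-1)!$, and this chain works uniformly for every $s\ge 2$. You apply $\exp(s)\ge s^{s}/s!$ directly to get $s^{s-1}/s!\le \exp(s)/s\le \exp(s-1)$. This avoids AM--GM but requires $s\ge e$, so you must check $s=1,2$ by hand. Both cases give $s^{s-1}/s!=1$, so your check is fine.
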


\begin{proof}
It is clear that (\ref{eq2-6}) follows from the definition.
For (\ref{eq:vertex-tree}),
the first inequality was
 established in Proposition 4.3(f) of
\cite{Sokal}.
Thus it suffices to prove the second inequality of (\ref{eq:vertex-tree}). When $s=1$, the result is trivial. Assume that $s\ge 2$ in the following. 

Note that for all $n\in\N$, $\exp(n)=\sum_{i=0}^\infty n^i/i!\ge n^n/n!$, and
$$n^{n}=\left(\frac{(n+1)(n-1)+1}{n}\right)^n\ge (n+1)^{n-1},$$
where the last inequality follows from the AM-GM inequality.
 Thus
\aln{le8-e3}
{
 \frac{s^{s-1}}{s!}
 =
 \frac{s^{s-2}}{(s-1)!}
 \le
 \frac{(s-1)^{s-1}}{(s-1)!}
 \le \exp(s-1),
}
which completes the proof.
\end{proof}

Based on Lemma~\ref{lem:vertextrees}, we also provide an upper bound for the number of subtrees with a fixed order and containing a particular edge.
For any $S\in \Ccal$ 
and any edge $e$ in $G$, 
let $\ST_e(G[S])$
denote the set of spanning trees $T$ of $G[S]$ with $e\in E(T)$.

 \begin{lemma}\label{lem:edge-trees}
For any $e\in E$ and $s\in\N$ with $s\ge 2$, let $N_e(s)$ be the number of trees $T$ 
in $G$
such that $|V(T)|=s$ and $e\in E(T)$. Then
\aln{eq2-8}
{
N_e(s)=\sum_{\substack{S\in\Ccal\\|S|=s}}
\sum_{T\in\ST_e(G[S])} 1.
}
Moreover,
\aln{eq:edge-tree}
{
 N_e(s)\le\frac{2s^{s-3}}{(s-2)!}\Delta^{s-2}
 \le2\Delta^{s-2}\exp(s-2).
}
\end{lemma}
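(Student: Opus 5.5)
Equation (\ref{eq2-8}) is immediate from the definitions. A tree $T$ in $G$ with $s$ vertices and $e\in E(T)$ has vertex set $S=V(T)$ with $|S|=s$, and $G[S]$ is connected, so $S\in\Ccal$. Moreover $T$ is a spanning tree of $G[S]$ containing $e$. Conversely, every pair $(S,T)$ with $S\in\Ccal$, $|S|=s$ and $T\in\ST_e(G[S])$ gives such a tree. This bijection yields (\ref{eq2-8}). The real content is the inequality (\ref{eq:edge-tree}), which I will reduce to Lemma~\ref{lem:vertextrees} by contracting or deleting the edge $e=uv$.

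\textbf{Main step.} Deleting $e$ from a tree $T$ with $e\in E(T)$ and $|V(T)|=s$ leaves two subtrees $T_u\ni u$ and $T_v\ni v$. They are vertex-disjoint, and their orders $a,b\ge1$ satisfy $a+b=s$. The map $T\mapsto(T_u,T_v)$ is injective, because $T=T_u\cup T_v+e$.

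In $T_u$, the vertex $u$ uses at most $\Delta-1$ of its neighbours, since $v\notin T_u$. The crude count $N_u(a)\le (a\Delta)^{a-1}/a!$ from Lemma~\ref{lem:vertextrees} then gives
\[
N_e(s)\le \sum_{a=1}^{s-1}\frac{(a\Delta)^{a-1}}{a!}\cdot\frac{(b\Delta)^{b-1}}{b!},\qquad b=s-a.
\]
This is $\Delta^{s-2}$ times $\sum_{a+b=s} a^{a-1}b^{b-1}/(a!\,b!)$. By Abel's identity, the convolution of $a^{a-1}/a!$ with itself is exactly the coefficient of $x^s$ in $T(x)^2$, where $T(x)=\sum_{n\ge1} n^{n-1}x^n/n!$ is the rooted-tree function. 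A standard Lagrange-inversion computation gives $[x^s]T(x)^2=2s^{s-3}/(s-2)!$, using $\frac{2}{s}[w^{s-2}]e^{sw}=\frac{2}{s}\cdot\frac{s^{s-2}}{(s-2)!}$. Hence
\[
N_e(s)\le \frac{2s^{s-3}}{(s-2)!}\Delta^{s-2}.
\]

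\textbf{Main obstacle.} The delicate point is making the first estimate legitimate. Lemma~\ref{lem:vertextrees} is stated for all trees through $u$ in $G$, and using it for each factor separately is an overcount, which is harmless. The only thing to check is that Sokal's bound $N_v(s)\le (s\Delta)^{s-1}/s!$ applies verbatim to each factor, which it does. If the Abel/Lagrange identity feels too heavy, a fallback is to contract $e$ to a vertex $w$ of degree at most $2\Delta-2$ in $G/e$ and count trees through $w$ of order $s-1$. That route would need a refined version of Sokal's bound with a different root degree, so I prefer the convolution route.

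\textbf{Second inequality.} It remains to show $\frac{2s^{s-3}}{(s-2)!}\le 2\exp(s-2)$. For $s=2$ both sides equal $1\cdot 2$. For $s\ge3$, write $m=s-2$. Then
\[
\frac{s^{s-3}}{(s-2)!}=\frac{(m+2)^{m-1}}{m!}.
\]
Apply the AM-GM trick from (\ref{le8-e3}) twice to get $(m+2)^{m-1}\le m^{m}$. Concretely, $(m+2)^{m-1}\le (m+1)^{m}\cdot\frac{1}{m+1}\cdot\bigl(\frac{m+2}{m+1}\bigr)^{m-1}$, and the factor is at most $e/(m+1)\le1$ for $m\ge2$. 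For $m=1$ the inequality $3^0\le1$ holds directly. Finally $m^m/m!\le e^m$, as in the proof of Lemma~\ref{lem:vertextrees}, which completes the bound.
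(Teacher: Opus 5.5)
Your proof of \eqref{eq2-8} and of the first inequality in \eqref{eq:edge-tree} is correct and follows the paper. You use the same injection $T\mapsto(T_u,T_v)$ obtained by deleting $e$, Sokal's bound on each factor, and the convolution identity $\sum_{i=1}^{s-1}a_ia_{s-i}=2s^{s-3}/(s-2)!$ with $a_i=i^{i-1}/i!$. The only difference is that you derive this identity by Lagrange inversion for the tree function, whereas the paper cites Stone's identity. (Identifying the sum as a coefficient of $T(x)^2$ is just the Cauchy product, not Abel's identity, but your Lagrange computation is right.)

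The gap is in the second inequality. Your ``concrete'' line is an identity, $(m+1)^m\cdot\frac{1}{m+1}\bigl(\frac{m+2}{m+1}\bigr)^{m-1}=(m+2)^{m-1}$. Bounding the factor by $e/(m+1)\le1$ therefore proves only $(m+2)^{m-1}\le(m+1)^m$, not the claimed $(m+2)^{m-1}\le m^m$. Chaining $n^n\ge(n+1)^{n-1}$ at $n=m+1$ and $n=m$ does not give it either; it leaves $(m+2)^{m-1}\le\frac{(m+1)^2}{m+2}\,m^m$, with a factor larger than $1$. Feeding what you actually proved into $n^n/n!\le e^n$ yields only $\frac{s^{s-3}}{(s-2)!}\le \exp(s-1)$, which is off by a factor $e$. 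The exact $\exp(s-2)$ is what the lemma asserts and what Proposition~\ref{prop1} uses downstream.

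The repair is local, because the claimed inequality is true. There are three ways to close it:
\begin{itemize}
\item Note that $(m+2)^{m-1}\le m^m$ is equivalent to $(1+2/m)^{m-1}\le m$. This holds for $m\ge 8$ because the left side is below $e^2<8$, and for $m\le 7$ by direct check (with equality at $m=1,2$).
\item Keep your bound $(m+2)^{m-1}\le(m+1)^m$ and use the sharper $n!\ge e(n/e)^n$. This gives $(m+1)^m/m!=(m+1)^{m+1}/(m+1)!\le e^m$.
\item Follow the paper: expand $(2+(s-2))^{s-3}$ binomially, use $\frac{1}{s-2}\le\frac12$ and $\frac{2^t}{2\,t!}\le 1$ for $s\ge 4$, and compare with a partial sum of the series for $\exp(s-2)$.
\end{itemize}
A harmless slip: at $s=2$ the two sides are $1$ and $2$, not equal.
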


\begin{proof}
It is clear that (\ref{eq2-8}) follows from the definition.
Then we shall prove (\ref{eq:edge-tree}) by applying Lemma~\ref{lem:vertextrees}. 

For all $i\in\N$, 
let
$a_i=\frac{i^{i-1}}{i!}
=\frac{i^{i-2}}{(i-1)!}.
$
Then by Lemma~\ref{lem:vertextrees}, 
for any $v\in V$,
\aln{ine2-19}
{ 
N_v(s)\le a_{s}\Delta^{s-1}.
} 
Also, it was proved in Section 6 of~\cite{Stone1985} that for all $s\ge 2$, 
$$
\frac{1}{2}\sum_{i=1}^{s-1}\binom{s-2}{i-1}i^{i-2}(s-i)^{s-i-2}=s^{s-3},
$$
which implies that
\aln{ine2-18}
{ 
\sum_{i=1}^{s-1}a_i a_{s-i}
 =
 \frac{2s^{s-3}}{(s-2)!}.
}

Let $e=uv\in E$.
For any tree $T$ with $|V(T)|=s$ and $uv\in E(T)$, observe that deleting $uv$ from $T$ yields two trees, one containing $u$ and the other one containing $v$, where the sum of the orders of the two trees is $|V(T)|$. Thus
$$
N_e(s)
 \le
 \sum_{i=1}^{s-1}N_u(i)N_v(s-i)
\le 
  \Delta^{s-2}
  \sum_{i=1}^{s-1}a_i a_{s-i}= \frac{2s^{s-3}}{(s-2)!}\Delta^{s-2},
$$
 where the last inequality and equality follow from (\ref{ine2-19}) and (\ref{ine2-18}), respectively.
 
Now it remains to show the last inequality in (\ref{eq:edge-tree}). 
It can be easily verified when $s=2,3$, while for $s\ge 4$,
\aln{}
{ 
 \frac{s^{s-3}}{(s-2)!}=&  \frac{(2+s-2)^{s-3}}{(s-2)!}\nonumber\\
 =& \frac{1}{(s-2)!}\sum_{t=0}^{s-3}\frac{(s-3)!}{t!(s-3-t)!}2^t(s-2)^{s-3-t}\nonumber\\
 =& \sum_{t=0}^{s-3}\frac{2^t}{(s-2)\times t!}\frac{(s-2)^{s-3-t}}{(s-3-t)!}\nonumber\\
\le & \sum_{t=0}^{s-3}\frac{2^t}{2\times t!}\frac{(s-2)^{s-3-t}}{(s-3-t)!}\nonumber\\
\le & \sum_{t=0}^{s-3}\frac{(s-2)^{s-3-t}}{(s-3-t)!}\nonumber\\
=&
\sum_{j=0}^{s-3}\frac{(s-2)^j}{j!}\nonumber\\
< &
\exp(s-2).
} 
The proof is complete.
\end{proof}

To conclude this section, we shall derive some numerical results based on Lemma~\ref{lem:edge-trees}.
For any $uv\in E$, it is clear that $|L(u)\setminus L(v)|=|L(v)\setminus L(u)|$. Then let 
\aln{alpha}
{\alpha_L(uv)=|L(u)\setminus L(v)|~~~\text{ and }~~~
D=\sum_{e\in E}\alpha_L(e).
}
We also need the following result established in~\cite{wangwei}.

\begin{lemma}[\cite{wangwei}]\label{wang}
For any $S\in\Ccal$ and  $T\in\ST(G[S])$,
\aln{eq:defect-tree}
{
 k-\left|\bigcap_{v\in S}L(v)\right|\le\sum_{e\in E(T)}\alpha_L(e).
}
\end{lemma}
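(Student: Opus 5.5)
The plan is a direct counting argument on the tree $T$, using the fact that $|L(v)|=k$ for every $v$. Fix a root $r\in S$. Since $\bigcap_{v\in S}L(v)\subseteq L(r)$ and $|L(r)|=k$, we have
$$
k-\left|\bigcap_{v\in S}L(v)\right| = \left|L(r)\setminus \bigcap_{v\in S}L(v)\right|.
$$
It therefore suffices to cover the set $L(r)\setminus\bigcap_{v\in S}L(v)$ by a union of sets indexed by the edges of $T$, where the set attached to an edge $e$ has size exactly $\alpha_L(e)$.

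To set up the covering, orient every edge of $T$ away from $r$. Each edge $e\in E(T)$ then becomes an ordered pair $(x,y)$, where $x$ is the endpoint closer to $r$ in $T$. To this edge we attach the set $L(x)\setminus L(y)$. By the remark preceding (\ref{alpha}), this set has size $|L(x)\setminus L(y)|=\alpha_L(xy)$, whichever endpoint comes first.

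Now take any colour $c\in L(r)\setminus\bigcap_{v\in S}L(v)$. Then $c\notin L(v)$ for some $v\in S$, and necessarily $v\neq r$. Consider the unique path $r=x_0,x_1,\dots,x_m=v$ in $T$. Since $c\in L(x_0)$ and $c\notin L(x_m)$, there is a first index $i$ with $c\notin L(x_{i+1})$. For that index $c\in L(x_i)$, so
$$
c\in L(x_i)\setminus L(x_{i+1}),
$$
and $(x_i,x_{i+1})$ is an oriented edge of $T$. Hence
$$
L(r)\setminus \bigcap_{v\in S}L(v)\subseteq \bigcup_{(x,y)\in E(T)} \bigl(L(x)\setminus L(y)\bigr).
$$
Taking cardinalities and applying the union bound gives
$$
k-\left|\bigcap_{v\in S}L(v)\right|\le \sum_{e\in E(T)}\alpha_L(e),
$$
which is (\ref{eq:defect-tree}).

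There is no real obstacle in this argument. The only point needing care is the orientation: the set attached to each edge must be $L(x)\setminus L(y)$ with $x$ the endpoint nearer the root. Its size is still $\alpha_L(xy)$ because all lists have the same size $k$.

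An alternative route is induction on $|S|$ by deleting a leaf $y$ of $T$ with neighbour $x$. This uses the inequality
$$
|I\cap L(y)|\ge |I|-|L(x)\setminus L(y)|
$$
for $I=\bigcap_{v\in S\setminus\{y\}}L(v)\subseteq L(x)$, which holds because $I\setminus L(y)\subseteq L(x)\setminus L(y)$.
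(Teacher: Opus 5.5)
Your proof is correct and complete. The paper gives no proof of this lemma of its own; it imports the result from \cite{wangwei}, so there is no in-paper argument to compare against, and your proof supplies the justification the paper omits.

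The key steps are sound. You root $T$ at $r$ and orient its edges away from $r$. Each colour of $L(r)$ that is missing from some list $L(v)$ is then charged to the first oriented edge $(x_i,x_{i+1})$ on the $r$--$v$ path of $T$ at which the colour disappears. Because every list has size $k$, the charge set $L(x)\setminus L(y)$ of each oriented edge has size exactly $\alpha_L(xy)$.

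Your two routes differ slightly in what they make visible:
\begin{itemize}
\item The covering argument is non-inductive and shows exactly where slack can enter: a set containment followed by one union bound. It also shows that the containment $L(r)\setminus\bigcap_{v\in S}L(v)\subseteq\bigcup_{(x,y)}\bigl(L(x)\setminus L(y)\bigr)$ holds for arbitrary lists. The hypothesis $|L(v)|=k$ is used only for $|L(r)|=k$ and for the symmetry $|L(x)\setminus L(y)|=|L(y)\setminus L(x)|$.
\item The leaf-deletion induction is shorter to write out formally. If you use it, start the induction at a single vertex, where both sides equal $0$, or at $|S|=2$, where equality holds. This matters because singletons are not members of $\Ccal$.
\end{itemize}
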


\begin{proposition}\label{prop1}
For $2\le s\le |V|$,
\aln{eq:defect-large}
{
 \sum_{\substack{S\in\Ccal\\|S|
 		=s}}
\Bra{
	1-\frac 1k 
\Abs{\bigcap_{v\in S}L(v) }
}
|\w_P(S)|
 \le\frac{2D \Delta^{s-2}}{k^s}\exp(s-2).
}
In particular,
\aln{eq:defect-two}
{
 \sum_{\substack{S\in\Ccal\\|S|
 		=2}}
 \Bra{
 	1-\frac 1k 
 	\Abs{\bigcap_{v\in S}L(v) }
 }	
\w_P(S)
=-\frac{D}{k^2}.
}
\end{proposition}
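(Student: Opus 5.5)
We bound $|\w_P(S)|$ by \eqref{eq:c-tree} and then use Lemma~\ref{wang} to turn the defect $1-\frac1k|\bigcap_{v\in S}L(v)|$ into a sum over tree edges; the resulting sum over pairs $(T,e)$ is controlled edge by edge by Lemma~\ref{lem:edge-trees}.

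\textbf{Step 1.} For $S\in\Ccal$ with $|S|=s$, inequality \eqref{eq:c-tree} gives
$$|\w_P(S)|\le \frac{\st(G[S])}{k^{s-1}}.$$
Since $|\bigcap_{v\in S}L(v)|\le k$, the defect factor is nonnegative, so we may bound it term by term.

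\textbf{Step 2.} For every $T\in\ST(G[S])$, Lemma~\ref{wang} gives $k-|\bigcap_{v\in S}L(v)|\le\sum_{e\in E(T)}\alpha_L(e)$. Summing this over all $\st(G[S])$ spanning trees (rather than using a single tree) yields
$$\Bigl(1-\frac1k\Bigl|\bigcap_{v\in S}L(v)\Bigr|\Bigr)|\w_P(S)|\le \frac{1}{k^{s}}\sum_{T\in\ST(G[S])}\sum_{e\in E(T)}\alpha_L(e).$$

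\textbf{Step 3.} Sum over all $S\in\Ccal$ with $|S|=s$ and swap the order of summation:
$$\sum_{\substack{S\in\Ccal\\|S|=s}}\;\sum_{T\in\ST(G[S])}\;\sum_{e\in E(T)}\alpha_L(e)=\sum_{e\in E}\alpha_L(e)\sum_{\substack{S\in\Ccal\\|S|=s}}\;\sum_{T\in\ST_e(G[S])}1=\sum_{e\in E}\alpha_L(e)\,N_e(s),$$
where the last equality is \eqref{eq2-8}. By \eqref{eq:edge-tree}, $N_e(s)\le 2\Delta^{s-2}\exp(s-2)$ for every $e$. Hence the left side of \eqref{eq:defect-large} is at most $2D\Delta^{s-2}\exp(s-2)/k^s$, using $D=\sum_{e\in E}\alpha_L(e)$.

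\textbf{Case $s=2$.} Here $S=\{u,v\}$ with $uv\in E$. The only connected spanning edge set of $G[S]$ is $\{uv\}$, so $c(S)=-1$ and $\w_P(S)=-1/k$. Also $|L(u)\cap L(v)|=k-\alpha_L(uv)$, so the defect factor equals $\alpha_L(uv)/k$. Each summand is therefore $-\alpha_L(uv)/k^2$, and summing over the edges gives exactly $-D/k^2$, which is \eqref{eq:defect-two}.

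\textbf{Main obstacle.} The only delicate point is Step 2. Summing Lemma~\ref{wang} over all spanning trees is what makes the bound match $|c(S)|\le\st(G[S])$ and produces the edge-rooted tree count $N_e(s)$. Once that is done, the rest is bookkeeping.
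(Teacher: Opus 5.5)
Your proposal is correct and is essentially the paper's own proof. You bound $|\w_P(S)|$ by $\st(G[S])/k^{s-1}$ via \eqref{eq:c-tree}, apply Lemma~\ref{wang} to every spanning tree, swap the summation to obtain $\sum_{e\in E}\alpha_L(e)N_e(s)$ and then invoke Lemma~\ref{lem:edge-trees}; the $s=2$ identity follows directly from $c(\{u,v\})=-1$ and $|L(u)\cap L(v)|=k-\alpha_L(uv)$, exactly as in the paper, and the only difference is that you treat the $s=2$ case last rather than first.
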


\begin{proof}
We shall prove \eqref{eq:defect-two} first. It is clear that any $S\in\Ccal$ with $|S|=2$ is in fact $\{u,v\}$ for some $uv\in E$. Then obviously,
$c(S)=-1$ and $\w_P(S)=-1/k$. Thus
\aln{prop1-e3}
{
 \sum_{\substack{S\in\Ccal\\|S|=2}}
 \Bra{
 	1-\frac 1k 
 	\Abs{\bigcap_{v\in S}L(v) }
 }
\w_P(S)
=&-\frac{1}{k}\sum_{uv\in E}\left(\frac{k-|L(u)\cap L(v)|}{k}\right)\nonumber\\
=&-\frac{1}{k}\sum_{uv\in E}\frac{|L(u)\setminus L(v)|}{k}\nonumber\\
=&-\frac{1}{k^2}\sum_{uv\in E}\alpha_L(uv)\nonumber\\
=&-\frac{D}{k^2}.
}

For (\ref{eq:defect-large}), now assume $|S|=s\ge 2$.
By (\ref{eq:c-tree}),
$$
|\w_P(S)|=\frac{|c(S)|}{k^{|S|-1}}\le \frac{\st(G[S])}{k^{s-1}}.
$$
Then
\aln{}
{
 k^s \sum_{\substack{S\in\Ccal\\|S|=s}}
\left(
1-\frac 1k 
\left|\bigcap_{v\in S}L(v)\right|
\right)
|\w_P(S)|
\le& 
 \sum_{\substack{S\in\Ccal\\|S|=s}}
\left(k-\left|\bigcap_{v\in S}L(v)\right|\right)\st(G[S])\nonumber\\
=& 
 \sum_{\substack{S\in\Ccal\\|S|=s}}\sum_{T\in\ST(G[S])}
\left(k-\left|\bigcap_{v\in S}L(v)\right|\right)\nonumber\\
 \le&
\sum_{\substack{S\in\Ccal\\|S|=s}} \sum_{T\in\ST(G[S])}
 \sum_{e\in E(T)}\alpha_L(e)\nonumber\\
 =&
 \sum_{e\in E}\alpha_L(e)
 \sum_{\substack{S\in\Ccal\\|S|=s}}\sum_{T\in\ST_e(G[S])}1\nonumber\\
 = & 
 \sum_{e\in E}\alpha_L(e)N_e(s)\nonumber\\
 \le & 2\Delta^{s-2}\exp(s-2) \, D,
}  
where the second and the last inequalities
follow
from \eqref{eq:defect-tree} and Lemma~\ref{lem:edge-trees},
respectively.
\end{proof}

\section{Proof of Theorem~\ref{main1}
\label{sec3}}
In this section, we prove Theorem~\ref{main1}. Our approach is to compare (\ref{ep1-1}) and (\ref{ep1-2}) by constructing an auxiliary polynomial that includes both expressions as special cases and then establishing the desired conclusion through  monotonicity arguments.

Let 
$\z$ be a real-valued weight function 
on $\Ccal$ and $\s\subseteq \Ccal$.
Define
\aln{eq:Z}
{
\phi_\s(\z)=
 \sum_{\Fcal \in 2^{\s}_0
              }
 \prod_{S\in\Fcal}\z(S).
}

\subsection{A result
associated with $\phi_\s(\z)$
}

By Lemmas~\ref{lem1} and~\ref{lem2}, it is clear that $P(G,k)=k^n\phi_\Ccal(\w_P)$ and $P(G,L)=k^n\phi_\Ccal(\w_L)$. Moreover,
 for any $S\in\Ccal$, let
$$
N_{\s}[S]=\{S'\in\s:S'\cap S\ne\varnothing\}.
$$
Then for each $S\in\s$, $S\in N_{\s}[S]$, and 
\aln{eq:deletion}
 {
\phi_\s(\z)=\phi_{\s\setminus\{S\}}(\z)
 +\z(S)\phi_{\s\setminus N_\s[S]}(\z),
}
which further gives
\aln{eq:derivative}
{
\frac{\partial \phi_\s(\z)}
{\partial \z(S)}=\phi_{\s\setminus N_\s[S]}(\z),
}
where $\partial \phi_\s(\z)
=\phi_\s(\z)-\phi_{\s\setminus\{S\}}(\z)$ and $\partial \z(S)=\z(S)$.

The first step towards proving Theorem~\ref{main1} is to provide a sufficient condition on $\z$ for $\phi_\s(\z)\neq0$, as follows. 

\begin{lemma}\label{lem:ratio}
Suppose that $\z$ is a real-valued weight function on $\Ccal$, and for each $S\in \Ccal$,  $\rho_S, a_S$ are non-negative real numbers such that $|\z(S)|\le\rho_S$ and
\aln{eq:local-condition}
{
 \sum_{S'\in N_{\Ccal}[S]}
 \rho_{S'}\exp(a_{S'})\le a_S.
}
Then for any $\s\subseteq \Ccal$,
\begin{enumerate}
\item $\phi_{\s}(\z)\neq 0$,
\item for any $S\in\s$, 
$
\left|
\frac{\phi_{{\s}\setminus \{S\}}(\z)}{\phi_{\s}(\z)}-1
 \right|
 \le \rho_S\exp(a_S)
 $ and 
 thus 
 $\left|
 \frac{\phi_{{\s}\setminus \{S\}}(\z)}{\phi_{\s}(\z)}
 \right|
 \le 1+\rho_S\exp(a_S),
 $
\item for any $S\in\s$, 
$ \left|
\frac{\phi_{\s\setminus N_{\s}[S]}(\z)}
{\phi_{\s}(\z)}-1
\right|
\le\exp(a_S)-1$,
and thus
$
 \left|
\frac{\phi_{\s\setminus N_{\s}[S]}(\z)}
{\phi_{\s}(\z)}
\right|
\le\exp(a_S).
$
\end{enumerate}
\end{lemma}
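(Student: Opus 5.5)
This is the Kotecký–Preiss / Dobrushin-type cluster argument, and I would prove it by induction on $|\s|$. The claim I would carry through the induction is the combined statement (1)–(3) for all $\s\subseteq\Ccal$ of size at most $m$. The base case $\s=\emptyset$ is immediate: $\phi_\emptyset(\z)=1\neq 0$, and items (2) and (3) are vacuous.

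For the inductive step, fix $\s$ with $|\s|=m+1$ and $S\in\s$. By \eqref{eq:deletion},
$$\frac{\phi_{\s}(\z)}{\phi_{\s\setminus\{S\}}(\z)}=1+\z(S)\,\frac{\phi_{\s\setminus N_\s[S]}(\z)}{\phi_{\s\setminus\{S\}}(\z)} .$$
The denominator is nonzero by induction, since $|\s\setminus\{S\}|=m$. The key quantity is the ratio $R=\phi_{\s\setminus N_\s[S]}(\z)/\phi_{\s\setminus\{S\}}(\z)$.

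To bound $R$, write $N_\s[S]\setminus\{S\}=\{S_1,\dots,S_t\}$. Removing these one at a time from $\s\setminus\{S\}$ gives a chain $\s_0=\s\setminus\{S\}\supset\s_1\supset\dots\supset\s_t=\s\setminus N_\s[S]$, all of size at most $m$. Then $R$ telescopes as
$$R=\prod_{j=1}^t \frac{\phi_{\s_{j-1}\setminus\{S_j\}}(\z)}{\phi_{\s_{j-1}}(\z)} .$$
Applying the inductive form of (2) to each factor gives
$$|R|\le\prod_j(1+\rho_{S_j}e^{a_{S_j}})\le\exp\Big(\sum_{j}\rho_{S_j}e^{a_{S_j}}\Big)\le \exp\big(a_S-\rho_S e^{a_S}\big),$$
where the last step uses \eqref{eq:local-condition} together with $S\in N_\Ccal[S]$ and $N_\s[S]\subseteq N_\Ccal[S]$. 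This bound has slack, which we will need. Hence $x:=\z(S)R$ satisfies $|x|\le\rho_S e^{a_S}e^{-\rho_S e^{a_S}}$. Writing $y=\rho_S e^{a_S}$, note that $y\le a_S$ by \eqref{eq:local-condition}.

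The main obstacle is item (2) itself, which requires $|1/(1+x)-1|=|x|/|1+x|\le y$. Here I must first rule out $1+x=0$. Since $|x|\le ye^{-y}\le 1/e<1$, we get $1+x\neq0$, so $\phi_\s(\z)\neq0$, which is item (1). Next, $|x|/(1-|x|)\le ye^{-y}/(1-ye^{-y})\le y$, because this is equivalent to $e^{-y}\le 1-ye^{-y}$, i.e. $(1+y)e^{-y}\le 1$, which is true. This gives item (2).

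For item (3), I would use the same telescoping idea on the full $\s$. Removing $S$ first and then $S_1,\dots,S_t$, and applying item (2) at each step (now valid for the sets $\s$ and the smaller ones, via induction and the step just proved), gives
$$\left|\frac{\phi_{\s\setminus N_\s[S]}}{\phi_\s}\right|\le\prod_{S'\in N_\s[S]}(1+\rho_{S'}e^{a_{S'}})\le e^{a_S}.$$
For the sharper statement $|\,\cdot-1|\le e^{a_S}-1$, I would write the ratio as $\prod_i(1+\epsilon_i)$ with $|\epsilon_i|\le\rho_{S_i}e^{a_{S_i}}$, taken from the first part of (2). The elementary inequality $|\prod(1+\epsilon_i)-1|\le\prod(1+|\epsilon_i|)-1\le e^{\sum|\epsilon_i|}-1\le e^{a_S}-1$ then finishes the proof.
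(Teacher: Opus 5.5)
Your proposal is correct and follows essentially the same route as the paper: induction on $|\s|$ via the deletion identity, telescoping the ratio $\phi_{\s\setminus N_\s[S]}/\phi_{\s\setminus\{S\}}$ using the inductive form of (ii), and the expansion $\bigl|\prod(1+\varepsilon_i)-1\bigr|\le\prod(1+|\varepsilon_i|)-1$ for (iii). The only difference is cosmetic and lies in how you use the slack from $S\in N_\Ccal[S]$: you bound the product over $N_\s[S]\setminus\{S\}$ by $\exp(a_S-\rho_Se^{a_S})$ to get $|x|\le ye^{-y}$, whereas the paper divides $\prod_{S'\in N_\s[S]}(1+\rho_{S'}e^{a_{S'}})\le e^{a_S}$ by $1+\rho_Se^{a_S}$ to get $|x|\le y/(1+y)$; both yield $|x|/(1-|x|)\le y$.
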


\begin{proof}
In this proof, we omit the notation $\z$ when no confusion can arise.
For each $S\in\Ccal$, let $\mu_S=\rho_S\exp(a_S)$. Then $\mu_S\ge 0$. 

We shall first prove (i) and (ii) by induction on $|\s|$. 
It is trivial when $|\s|=0$. 

Now assume that $|\s|\ge 1$,
and both (i) and (ii) hold
for any $\s'\subseteq \Ccal$ with $|\s'|<|\s|$.

Let $S\in \s$. Then $S\in  N_\s[S]$.
Since both $|\s\setminus\{S\}|$ and $|\s\setminus N_\s[S]|$ are smaller than $|\s|$, we have $\phi_{\s\setminus\{S\}}\ne 0$
and $\phi_{\s\setminus N_\s[S]}\neq 0$ 
by induction. Let 
$$
 R=\z(S)
 \frac{\phi_{\s\setminus N_\s[S]}}
      {\phi_{\s\setminus\{S\}}}.
$$
Then by \eqref{eq:deletion},  
\aln{eq1-10}
{
\phi_\s=\phi_{\s\setminus\{S\}}(1+R).
}
Assume that $N_{\s}[S]=
\{S_0, S_1, \ldots, S_m\}$,
where $S_0=S$.
By repeatedly deleting elements in $N_\s[S]$ from $\s\setminus\{S\}$ and applying the induction hypothesis of (ii),
\aln{eq1-2}
{
 |R|=|\z(S)|
  \left|\frac{\phi_{\s\setminus N_\s[S]}}
       {\phi_{\s\setminus\{S\}}}\right|
  \le   \rho_S  
  \prod_{i=0}^{m-1}
  \Abs{
  \frac{\phi_{\s\setminus 
\{S_0,\dots,S_{i+1}\}  
}}
  {\phi_{\s\setminus\{S_0,\dots,S_i\}}}
}
       \le \rho_S     
\prod_{S'\in N_{\s}[S]
	\atop S'\ne S
}
(1+\mu_{S'}).
}
Note that
\eqref{eq:local-condition} implies that
\aln{eq:product-condition}{
\prod_{S'\in N_\s[S]}(1+\mu_{S'})
\le  \prod_{S'\in N_\s[S]}\exp(\mu_{S'})
\le \exp\left(\sum_{S'\in N_\Ccal[S]}\mu_{S'}\right)
 \le\exp(a_S),
}
where the second inequality holds as $N_\s[S]\subseteq N_\Ccal[S]$ and $\mu_{S'}\ge0$ for any $S'\in\Ccal$.

Since $\mu_S\ge 0$, (\ref{eq1-2}) and (\ref{eq:product-condition}) together give
\aln{ieq1-1}
{
 |R|\le \rho_S
\frac{  \prod_{S'\in N_\s[S]}(1+\mu_S')}{1+\mu_S}
 \le \frac{\rho_S\exp(a_S)}{1+\mu_S}
=\frac{\mu_S}{1+\mu_S}<1.
}

Clearly, (\ref{ieq1-1}) indicates $|1+R|\ge 1-|R|>0$. 
Since $|\phi_{\s\setminus\{S\}}|>0$, 
 by (\ref{eq1-10}), 
$$|\phi_\s|
=|\phi_{\s\setminus\{S\}}||1+R|> 0,$$ 
implying that $\phi_\s\neq 0$.
Moreover, for any $S\in\s$, 
by (\ref{eq1-10}),
\aln{ieq1-2}
{
 \left|\frac{\phi_{\s\setminus\{S\}}}{\phi_\s}-1\right|
=\Abs{\frac 1{R+1}-1}
 =\frac{|R|}{|1+R|}\le\frac{|R|}{1-|R|}\le \frac{\frac{\mu_S}{1+\mu_S}}{1-\frac{\mu_S}{1+\mu_S}}=\mu_S.
}
Hence by induction, (i) and (ii) hold.

Now it remains to prove (iii). 
For any $S\in\s$, assume that $
N_{\s}[S]=\{S_1,\ldots,S_m\}$. Let $\s_0=\s$, and $\s_i=\s_{i-1}\setminus\{S_i\}$ for each $i\in[m]$. Then $\s_m=\s\setminus N_{\s}[S]$. 

Further, let $\varepsilon_i
=\frac{\phi_{\s_i}}{\phi_{\s_{i-1}}}-1$ for each $i\in[m]$. Then $
\frac{\phi_{\s\setminus N_{\s}[S]}}{\phi_{\s}}
=\prod\limits_{i=1}^m(1+\varepsilon_i)$, and $|\varepsilon_i|\le\mu_{S_i}$ follows from (ii).
     Thus
\aln{}{
\left|
\frac{\phi_{\s\setminus N_{\s}[S]}}
     {\phi_{\s}}
-1
\right|
&=
\left|
\prod_{i=1}^m(1+\varepsilon_i)-1
\right|\nonumber\\
&=
\left|
\sum_{\varnothing\neq I\subseteq[m]}
\prod_{i\in I}\varepsilon_i
\right|\nonumber\\
&\le
\sum_{\varnothing\neq I\subseteq[m]}
\prod_{i\in I}|\varepsilon_i|
\nonumber\\
&=
\prod_{i=1}^m(1+|\varepsilon_i|)-1\nonumber\\
&\le
\prod_{S'\in N_{\s}[S]}(1+\mu_{S'})-1\nonumber\\
&\le \exp(a_S)-1,
}
where the last inequality holds due to (\ref{eq:product-condition}).
The proof is complete.
\end{proof}

Next, we specify a real-valued weight function on $\Ccal$ which satisfies the requirements of Lemma~\ref{lem:ratio}.
 For $0\le t\le1$, let $\w_t$ be the real-valued weight function on $\Ccal$ that for each $S\in \Ccal$,
 \aln{eq:interpolation}
 {
\w_t(S)=\w_P(S)\left(1-t\left(1-\frac{1}{k}\left|\bigcap_{v\in S}L(v)\right|\right)\right).
}
 Then $\w_0(S)=\w_P(S)$, $\w_1(S)=\w_L(S)$, and
 $|\w_t(S)|\le|\w_P(S)|$ for $0\le t\le1$.
We shall show in the next subsection that for $0\le t\le 1$, $\phi_\Ccal(\w_t)\neq 0$ under some given conditions.

\subsection{Proof of Theorem~\ref{main1}}

In the following, we first
establish two results
(i.e., Lemma~\ref{lem:uniform} and Proposition~\ref{final})
with assumption that 
 $r,\lambda,x,\epsilon$ are positive real numbers satisfying the following conditions:
\begin{equation}
	\label{conditions}
\left\{
\begin{aligned}
&0<x<1,
\\
&\exp(\lambda)
\cdot \frac{x}{1-x}\leq \lambda,\\
&\exp(2\lambda)
\cdot \frac{1+x}{1-x}
\le 2-\epsilon,\\
&
r=\frac{\exp(1+\lambda)}{x}.
\end{aligned}
\right.
\end{equation}
Then in the proof of Theorem~\ref{main1} on page~\pageref{pro-main1}, 
suitable values of $r$, $\lambda$, $x$, and $\epsilon$ satisfying all conditions in~(\ref{conditions})
will be chosen.


\begin{lemma}
	\label{lem:uniform}
Let	$r,\lambda,x$ 
and $\epsilon$ be any positive real numbers satisfying
	all conditions in 
	(\ref{conditions}).
If $k\ge r\Delta$ and $\Delta\ge 3$, then for any $0\le t\le 1$, we have $\phi_\Ccal(\w_t)\ne 0$,
and for any
$S\in\Ccal$,
\aln{uniform-e1}
{ \left|\frac{\phi_{\Ccal\setminus N_\Ccal[S]}( \w_t)}
      {\phi_\Ccal( \w_t)}-1\right|\le
      \exp\left(\lambda|S|\right)-1,
}
implying that 
 $
      \left|\frac{\phi_{\Ccal\setminus N_\Ccal[S]}( \w_t)}
            {\phi_\Ccal( \w_t)}\right|\le\exp
            \left(\lambda|S|\right).
$
\end{lemma}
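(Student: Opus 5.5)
The plan is to apply Lemma~\ref{lem:ratio} with $\z=\w_t$, $\s=\Ccal$, and the choices
$$
\rho_S=|\w_P(S)|,\qquad a_S=\lambda|S|\qquad (S\in\Ccal).
$$
Since $|\w_t(S)|\le|\w_P(S)|=\rho_S$ for all $0\le t\le1$, this choice works uniformly in $t$. Once the local condition \eqref{eq:local-condition} is verified, part (i) of Lemma~\ref{lem:ratio} gives $\phi_\Ccal(\w_t)\neq0$. Part (iii), applied with $\s=\Ccal$ and $a_S=\lambda|S|$, gives \eqref{uniform-e1} and the stated consequence. So everything reduces to checking \eqref{eq:local-condition}, i.e.
$$
\sum_{S'\in N_\Ccal[S]}|\w_P(S')|\exp(\lambda|S'|)\le\lambda|S|\quad\text{for every } S\in\Ccal .
$$

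To verify this, I would take a union bound over the vertices of $S$. Every $S'\in N_\Ccal[S]$ contains some $v\in S$, so the left side is at most
$$
\sum_{v\in S}\ \sum_{S'\in\Ccal,\ v\in S'}|\w_P(S')|e^{\lambda|S'|}.
$$
I then group the inner sum by $s=|S'|\ge2$. By \eqref{eq:c-tree}, $|\w_P(S')|\le \st(G[S'])/k^{s-1}$. By \eqref{eq2-6} and \eqref{eq:vertex-tree} of Lemma~\ref{lem:vertextrees}, summing $\st(G[S'])$ over those $S'$ of size $s$ containing $v$ gives $N_v(s)\le\Delta^{s-1}e^{s-1}$. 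Hence the inner sum is at most
$$
\sum_{s\ge2}\frac{\Delta^{s-1}e^{s-1}}{k^{s-1}}e^{\lambda s}
= e^{\lambda}\sum_{j\ge1}\Bigl(\frac{\Delta e^{1+\lambda}}{k}\Bigr)^{j}.
$$

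Next I use the hypothesis on $k$. Since $k\ge r\Delta=e^{1+\lambda}\Delta/x$, the ratio $\Delta e^{1+\lambda}/k$ is at most $x$, and $x<1$ by \eqref{conditions}. So the geometric series converges and the inner sum is at most $e^{\lambda}x/(1-x)$. By the second condition of \eqref{conditions}, this is at most $\lambda$. Summing over the $|S|$ vertices $v\in S$ then gives the required bound $\lambda|S|$, and all hypotheses of Lemma~\ref{lem:ratio} hold with nonnegative $\rho_S$ and $a_S$.

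I do not expect a serious obstacle. The only delicate point is that the sum over $S'$ containing $v$ must be controlled by the tree count $N_v(s)$ rather than by the number of sets, and that is exactly what \eqref{eq:c-tree} combined with \eqref{eq2-6} provides. The assumption $\Delta\ge3$ and the third condition of \eqref{conditions} (involving $\epsilon$) do not seem to be needed here; presumably they are used later, in Proposition~\ref{final}.
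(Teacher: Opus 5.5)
Your proposal is correct and follows the paper's proof essentially step for step. It uses the same choice $\rho_S=|\w_P(S)|$, $a_S=\lambda|S|$ in Lemma~\ref{lem:ratio}, the same union bound over $v\in S$, and the same geometric-series estimate via \eqref{eq:c-tree} and Lemma~\ref{lem:vertextrees}. Your remark that $\Delta\ge 3$ and the $\epsilon$-condition are not needed here is also accurate.
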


\begin{proof}

To prove this lemma, it suffices to apply Lemma~\ref{lem:ratio} by taking $\rho_S=|\w_P(S)|$ and  $a_S=\lambda|S|$ for each $S\in\Ccal$. Since $|\w_t(S)|\le |\w_P(S)|=\rho_S$, it remains to verify (\ref{eq:local-condition}). 

For any $S\in \Ccal$,
\aln{ine3-15}{
 \sum_{Q\in N_{\Ccal}[S]}
 \rho_Q\exp(a_Q)
 &= \sum_{\substack{Q\in\Ccal\\Q\cap S\ne\varnothing}}
 |\w_P(Q)|\exp(\lambda|Q|) 
 \nonumber \\
 &\le
   \sum_{v\in S}
   \sum_{\substack{Q\in\Ccal:v\in Q}}
 |\w_P(Q)|\exp(\lambda|Q|).
}

For any fixed $v\in V$, by (\ref{eq:c-tree}), (\ref{conditions}), and Lemma~\ref{lem:vertextrees},
\aln{eq:vertex-sum}
{
 \sum_{\substack{Q\in\Ccal:v\in Q}}
 |\w_P(Q)|\exp(\lambda|Q|)
&=\sum_{\substack{Q\in\Ccal:v\in Q}}
  \frac{|c(Q)|}{k^{|Q|-1}}\exp(\lambda|Q|)\nonumber\\
  &\le \sum_{\substack{Q\in\Ccal:v\in Q}}
    \frac{\st(G[Q])}{k^{|Q|-1}}\exp(\lambda|Q|)\nonumber\\
  &=\sum_{s\ge 2} \frac{\exp(\lambda s)}{k^{s-1}}  \sum_{\substack{Q\in\Ccal:v\in Q\\ |Q|=s}}
\st(G[Q])\nonumber\\
  &\le \sum_{s\ge 2} \frac{\exp(\lambda s)}{k^{s-1}}  \Delta^{s-1}\exp(s-1)\nonumber\\
 & \le \sum_{s\ge 2} \left(\frac{1}{r}\right)^{s-1} \exp(\lambda )\left(\exp(\lambda+1)\right)^{s-1}\nonumber\\
 &=\exp(\lambda)\sum_{i\ge 1}x^i
 \nonumber \\
 &=\exp(\lambda) \cdot \frac{x}{1-x}
 \nonumber \\ 
 &\le \lambda, 
  }
where the last four steps
follow from $k\ge r\Delta$  and the conditions
in (\ref{conditions}).

Then by (\ref{conditions}), (\ref{ine3-15}) and  
(\ref{eq:vertex-sum}), 
$$
 \sum_{Q\in N_{\Ccal}[S]}
 \rho_Q\exp(a_Q)
 \le 
  \sum_{v\in S}\sum_{Q\in \Ccal:v\in Q} 
  |\w_P(Q)|\exp(\lambda|Q|)
 \le
 \sum_{v\in S} \lambda
 =a_S.
$$
Hence the result follows from Lemma~\ref{lem:ratio} (i) and (iii).
\end{proof}

\begin{proposition}\label{final}
Let	$r,\lambda,x$ 
	and $\epsilon$ be any positive real numbers satisfying
	all conditions in 
	(\ref{conditions}).
If $k\ge r\Delta$ and $\Delta\ge 3$, then 
\aln{finalieq}
{P(G,L)\ge 
	P(G,k) \exp\left(\frac{\epsilon}{k^2}\sum_{uv\in E(G)}|L(u)\setminus L(v)|\right).}
\end{proposition}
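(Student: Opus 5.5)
We will compare $P(G,L)=k^n\phi_\Ccal(\w_1)$ and $P(G,k)=k^n\phi_\Ccal(\w_0)$ by interpolating along $t\in[0,1]$ with the weights $\w_t$ of (\ref{eq:interpolation}). By Lemma~\ref{lem:uniform}, $\phi_\Ccal(\w_t)\ne0$ for all $t\in[0,1]$. Since $\phi_\Ccal(\w_t)$ is a polynomial in $t$ and is positive at $t=0$ (it equals $P(G,k)/k^n>0$ for $k\ge r\Delta>\Delta+1$), it stays positive on $[0,1]$. So $f(t)=\log\phi_\Ccal(\w_t)$ is well defined and differentiable there. It then suffices to show $f'(t)\ge \epsilon D/k^2$ for all $t$, because integrating over $[0,1]$ gives $P(G,L)/P(G,k)\ge\exp(\epsilon D/k^2)$, which is (\ref{finalieq}).

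By the chain rule and (\ref{eq:derivative}),
\[
f'(t)=\sum_{S\in\Ccal}\frac{d\w_t(S)}{dt}\cdot\frac{\phi_{\Ccal\setminus N_\Ccal[S]}(\w_t)}{\phi_\Ccal(\w_t)},
\qquad
\frac{d\w_t(S)}{dt}=-\w_P(S)\,\delta_S,
\]
where $\delta_S=1-\frac1k|\bigcap_{v\in S}L(v)|\ge0$. We split the sum into $|S|=2$ and $|S|\ge3$, and write each ratio as $1+\eta_S$ with $|\eta_S|\le \exp(\lambda|S|)-1$, using (\ref{uniform-e1}).

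For $|S|=2$ we have $-\w_P(S)\delta_S=\delta_S/k\ge0$, and by (\ref{eq:defect-two}) these terms sum to $D/k^2$. So their contribution is at least
\[
\frac{D}{k^2}\bigl(1-(\exp(2\lambda)-1)\bigr)=\frac{D}{k^2}\bigl(2-\exp(2\lambda)\bigr).
\]
For $|S|=s\ge3$, we bound each term in absolute value by $|\w_P(S)|\delta_S\exp(\lambda s)$. Summing over $S$ of size $s$ and applying (\ref{eq:defect-large}) gives at most $\frac{2D\Delta^{s-2}}{k^s}e^{s-2}e^{\lambda s}$. Summing over $s\ge3$ with $k\ge r\Delta$ and $r=e^{1+\lambda}/x$ gives
\[
\frac{2De^{2\lambda}}{k^2}\sum_{s\ge3}\Bigl(\frac{\Delta e^{1+\lambda}}{k}\Bigr)^{s-2}\le \frac{2De^{2\lambda}}{k^2}\cdot\frac{x}{1-x}.
\]

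Combining the two parts,
\[
f'(t)\ge \frac{D}{k^2}\Bigl(2-e^{2\lambda}-2e^{2\lambda}\tfrac{x}{1-x}\Bigr)=\frac{D}{k^2}\Bigl(2-e^{2\lambda}\tfrac{1+x}{1-x}\Bigr)\ge\frac{\epsilon D}{k^2},
\]
by the third condition in (\ref{conditions}). The main obstacle is the bookkeeping in the $|S|=2$ term: we must keep the exact sign, using $1-|\eta_S|\ge 2-e^{2\lambda}$, rather than bounding it in absolute value, so that the constants match $\frac{1+x}{1-x}$ exactly.

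A secondary point is justifying the differentiation. The identity (\ref{eq:derivative}) is exact because $\phi_\Ccal$ is multilinear in the weights, and the change of variables $\w_t$ is affine in $t$.
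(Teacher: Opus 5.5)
Your proposal is correct and follows essentially the same route as the paper: interpolate with $\w_t$, use Lemma~\ref{lem:uniform} for positivity and the ratio bounds, and split the logarithmic derivative into $|S|=2$ (handled exactly via \eqref{eq:defect-two}) and $|S|\ge 3$ (handled via \eqref{eq:defect-large} and a geometric series), arriving at the same constant $2-\exp(2\lambda)\frac{1+x}{1-x}\ge\epsilon$. The only cosmetic difference is that you integrate $(\log\varphi)'$ directly, whereas the paper shows $\exp(-D\epsilon t/k^2)\varphi(t)$ is non-decreasing; the two are equivalent.
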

\begin{proof}
By the conditions in (\ref{conditions}) and $\lambda>0$, we have $r=\exp(1+\lambda)/x>1$. Then $k\ge r\Delta$ implies that $k\ge \Delta+1$ as $k$ is an integer. Hence by Brooks' Theorem, $P(G,k)>0$.

Let $\varphi(t)$ be the polynomial for $0\le t\le 1$ such that 
$\varphi(t)=\phi_\Ccal(\w_t),$
where $\phi_\Ccal(\w_t)$ is defined in (\ref{eq:Z}) and (\ref{eq:interpolation}). Then $\varphi(0)=P(G,k)/k^n>0$ and $\varphi(1)=P(G,L)/k^n$. By (\ref{alpha}), it suffices to prove that $\varphi(1)\ge\varphi(0) \exp(D \epsilon/k^2)$.

It is clear that $\varphi(t)$ is real-valued and continuous, and
Lemma~\ref{lem:uniform} shows that $\varphi(t)=\phi_\Ccal(\w_t)\neq 0$ for $0\le t\le1$. 
Then $\varphi(0)>0$ implies that $\varphi(t)>0$ for $0\le t\le 1$.

By (\ref{eq:derivative}) and (\ref{eq:interpolation}), for each $S\in\Ccal$,
$$
\left\{
\begin{aligned}
	\frac{\partial \phi_\Ccal(\w_t)}{\partial \w_t(S)}&=\phi_{\Ccal\setminus N_\Ccal[S]}(\w_t),\\
	\frac{d\w_t(S)}{dt}&=-\w_P(S)\left(1-\frac{1}{k}\left|\bigcap_{v\in S}L(v)\right|\right).\\
\end{aligned}
\right.
$$
Then
\aln{}{
	\varphi'(t)
	&=
	\sum_{S\in\mathcal C}
	\frac{\partial \phi_\Ccal(\w_t)}{\partial \w_t(S)}
	\frac{d\w_t(S)}{dt}
	\nonumber\\
	&=
	-\sum_{S\in\mathcal C}\phi_{\mathcal C\setminus N_{\mathcal C}[S]}(\w_t)\w_P(S)\left(1-\frac{1}{k}\left|\bigcap_{v\in S}L(v)\right|\right).
}

Since $\varphi(t)>0$ for $0\le t\le 1$, we further have
$$
\frac{\varphi'(t)}{\varphi(t)}=
-\sum_{S\in\mathcal C}\frac{
	\phi_{\mathcal C\setminus N_{\mathcal C}[S]}(\w_t)
}{
	\phi_{\mathcal C}(\w_t)
}\w_P(S)\left(1-\frac{1}{k}\left|\bigcap_{v\in S}L(v)\right|\right).
$$
Let 
$$Q_1=\left|-\frac{D}{k^2}-\sum_{\substack{S\in\mathcal C\\ |S|=2}}\frac{
	\phi_{\mathcal C\setminus N_{\mathcal C}[S]}(\w_t)
}{
	\phi_{\mathcal C}(\w_t)
}\w_P(S)\left(1-\frac{1}{k}\left|\bigcap_{v\in S}L(v)\right|\right)\right|,$$
and 
$$Q_2=\left|-\sum_{\substack{S\in\Ccal\\ |S|\ge3}}
\frac{
	\phi_{\mathcal C\setminus N_{\mathcal C}[S]}(\w_t)
}{
	\phi_{\mathcal C}(\w_t)
}\w_P(S)\left(1-\frac{1}{k}\left|\bigcap_{v\in S}L(v)\right|\right)\right|.$$
Then 
\aln{ine3-21}
{\left|\frac{\varphi'(t)}{\varphi(t)}-\frac{D}{k^2}
	\right|\le Q_1+Q_2.}
We next estimate $Q_1$ and $Q_2$.

By Proposition~\ref{prop1} and Lemma~\ref{lem:uniform},
\aln{boundq1}{
	Q_1
	&=
	\left|
	-\frac{D}{k^2}
	-
	\sum_{uv\in E}
	\frac{
		\phi_{\mathcal C\setminus N_{\mathcal C}[\{u,v\}]}(\boldsymbol w_t)
	}{
		\phi_{\mathcal C}(\boldsymbol w_t)
	}
	\boldsymbol w_P(\{u,v\})\left(1-\frac{1}{k}\left|L(u)\cap L(v)\right|\right)
	\right|
	\notag\\
	&=
	\left|
	-\frac{1}{k^2}\sum_{uv\in E}\alpha_L(uv)
	+\frac{1}{k^2}
	\sum_{uv\in E}
	\frac{
		\phi_{\mathcal C\setminus N_{\mathcal C}[\{u,v\}]}(\boldsymbol w_t)
	}{
		\phi_{\mathcal C}(\boldsymbol w_t)
	}
	\alpha_L(uv)\right|
	\notag\\
	&=
	\frac{1}{k^2}
	\left|
	\sum_{uv\in E}
	\alpha_L(uv)
	\left(
	\frac{
		\phi_{\mathcal C\setminus
			N_{\mathcal C}[\{u,v\}]}(\boldsymbol w_t)
	}{
		\phi_{\mathcal C}(\boldsymbol w_t)
	}-1
	\right)
	\right|
	\notag\\
	&\le
	\frac{1}{k^2}
	\sum_{uv\in E}
	\alpha_L(uv)
	\left|
	\frac{
		\phi_{\mathcal C\setminus
			N_{\mathcal C}[\{u,v\}]}(\boldsymbol w_t)
	}{
		\phi_{\mathcal C}(\boldsymbol w_t)
	}
	-1
	\right|
	\notag\\
	&\le
	\frac{D}{k^2}\bigl(\exp(2\lambda)-1\bigr),
}
and
\aln{eq:error-three}{
	Q_2
	&\le
	\sum_{s\ge3}
	\sum_{\substack{S\in\mathcal C\\ |S|=s}}
	\left(1-\frac{1}{k}\left|\bigcap_{v\in S}L(v)\right|\right)|w_P(S)|
	\left|
	\frac{
		\phi_{\mathcal C\setminus N_{\mathcal C}[S]}(\boldsymbol w_t)
	}{
		\phi_{\mathcal C}(\boldsymbol w_t)
	}
	\right|
	\notag\\
	&\le
	\sum_{s\ge3}
	\frac{2D\Delta^{s-2}}{k^s}\exp(s-2)
	\exp(\lambda s)
	\notag\\
	&\le 
	\sum_{s\ge3}
	\frac{2D\exp(2\lambda )}{k^2}
	\left(\frac{1}{r}\right)^{s-2}
	\left(\exp(1+\lambda )\right)^{s-2}
	\notag\\
	&=
	\frac{2D\exp(2\lambda )}{k^2}
	\sum_{j\ge1}
	\left(
	\frac{\exp(1+\lambda )}{r}
	\right)^j\notag\\
	&=\frac{2D\exp(2\lambda )}{k^2}\frac{x}{1-x},
	}
where the last three steps follow from $k\ge r\Delta$ and the conditions in (\ref{conditions}).

Moreover, by (\ref{conditions}),
we have $\exp(2\lambda)\cdot 
\frac{1+x}{1-x}\le (2-\epsilon)$,
and thus 
\aln{eq:total-error}
{
	Q_1+Q_2
	&\le
	\frac{D}{k^2}
	\left(
	\exp(2\lambda )-1
	+
	2\exp(2\lambda)\frac{x}{1-x}
	\right)
	\nonumber \\
&=	\frac{D}{k^2}
\left(\frac{\exp(2\lambda)(1+x)}{1-x}-1\right)
	\le \frac{(1-\epsilon)D}{k^2}.
}
Thus by (\ref{ine3-21}) and (\ref{eq:total-error}),
$$
\left|
\frac{\varphi'(t)}{\varphi(t)}-\frac{D}{k^2}\right|
\le Q_1+Q_2\le \frac{(1-\epsilon)D}{k^2},~~\text{and}~~\frac{\varphi'(t)}{\varphi(t)}
	\ge\frac{D\epsilon}{k^2}.
$$
Then since $\varphi(t)>0$,
\aln{bound}{
 \varphi'(t)\ge \varphi(t)
\frac{D\epsilon}{k^2}.
}

Finally, for $0\le t\le 1$, define $$\psi(t)=\exp\left(-\frac{D\epsilon }{k^2}t\right)\varphi(t).$$
Then by (\ref{bound}),
$$
\psi'(t)=\exp\left(-\frac{D\epsilon}{k^2}t\right)\left(\varphi'(t)-\frac{D\epsilon}{k^2}\varphi(t)\right)\ge 0,
$$
which implies that $\psi(t)$ is non-decreasing for $0\le t\le 1$. Thus
$$
\exp\left(-\frac{D\epsilon}{k^2}\right) \varphi(1)= \psi(1)\ge \psi(0)=\varphi(0),
$$
implying that
$$
\varphi(1)\ge 
\varphi(0)\exp\!\left(\frac{D\epsilon}{k^2}\right).
$$
The result is proven.
\end{proof}

\noindent\textit{Proof of Theorem~\ref{main1}}. \label{pro-main1}
Suppose that $G=(V,E)$ is a connected graph with maximum degree $\Delta\ge 3$, and $L$ is a $k$-assignment of $G$ with $k\ge 23.41\Delta$. 

Let $\epsilon_1 = 2\cdot 10^{-6}$, and 
let $x$ and $\lambda$ be two numbers satisfying the following two equations:
\begin{equation}
	\label{eq-x-la}
\left \{
\begin{array}{l}
\exp(\lambda)
\cdot \frac{x}{1-x}=\lambda,
\\
\exp(2\lambda)
\cdot \frac{1+x}{1-x}
=2-\epsilon_1.
\end{array}
\right.
\end{equation}
The system of 
equations in (\ref{eq-x-la})
has a solution $(x,\lambda)$
with 
$0.142295<x< 0.142296$ and 
$0.2033<\lambda<0.20331$.

Now take $\epsilon = 10^{-6}<\epsilon_1$ 
and $r=\frac{\exp(1+\lambda)}{x}$.
Then, the four numbers $\epsilon, x, \lambda$ and $r$ 
satisfy the conditions in 
(\ref{conditions}).
Note that  
$$
r=\frac{\exp(1+\lambda)}{x}
<\frac{\exp\left(1+ 0.20331\right)}{ 0.142295}
<23.41.
$$

Assume that $k$ is any integer 
with $k\ge 23.41\Delta$.
Then $k>r\Delta$. 
Obviously, 
(\ref{main1-e1}) follows directly from
Proposition~\ref{final},
and 
(\ref{main1-e1})
implies that 
$P_{\ell}(G,k)\ge P(G,k)$.
Hence, by (\ref{listcolor}), 
$P(G,k)=P_{\ell}(G,k)$
for every integer $k\ge 23.41\Delta$.
The proof is complete.
 \qed

\medskip
\noindent\textbf{Declaration on the use of AI:}
During the preparation of this work, the authors used AI systems to assist in exploring potential approaches to the proof of Theorem~\ref{main1}. All AI-generated suggestions were
verified and refined by the authors, who take full responsibility for the correctness and originality of the paper.

\noindent {\bf Data availability statement}:  Not applicable.

\section*{Acknowledgment}
This work is supported by the National Natural Science Foundation of China (No. 12501498), the Natural Science Foundation of Xiamen, China (No. 3502Z202571026), and the Foundation for Cultivated Young Talents of Fujian Province, China (No. 2025350064).


\begin{thebibliography}{99}
\bibitem{birk}
G.D. Birkhoff,
A determinant formula for the number of ways of coloring a map,
\textit{Ann. of Math.} \textbf{14} (1912), 42--46.

\bibitem{birk2}
G.D. Birkhoff and D.C. Lewis,
Chromatic polynomials,
\textit{Trans. Amer. Math. Soc.} \textbf{60} (1946), 355--451.

\bibitem{dong1}
F.M. Dong and K.M. Koh,
Foundations of the chromatic polynomial,
in J.A. Ellis-Monaghan and I. Moffatt (eds.),
\textit{Handbook of the Tutte Polynomial and Related Topics},
CRC Press, Boca Raton, FL, 2022, pp. 213--251.

\bibitem{dong0}
F.M. Dong, K.M. Koh and K.L. Teo,
\textit{Chromatic Polynomials and Chromaticity of Graphs},
World Scientific, Singapore, 2005.

\bibitem{Dong22}
F.M. Dong and M.Q. Zhang,
An improved lower bound of $P(G,L)-P(G,k)$ for $k$-assignments $L$,
\textit{J. Combin. Theory Ser. B} \textbf{161} (2023), 109--119.

\bibitem{Donner}
Q. Donner,
On the number of list-colorings,
\textit{J. Graph Theory} \textbf{16} (1992), 239--245.

\bibitem{erdos}
P. Erd\H{o}s, A.L. Rubin and H. Taylor,
Choosability in graphs,
\textit{Congr. Numer.} \textbf{26} (1979), 125--157.

\bibitem{Jackson2015}
B. Jackson,
Chromatic polynomials,
in L.W. Beineke and R.J. Wilson (eds.),
\textit{Topics in Chromatic Graph Theory},
Encyclopedia of Mathematics and its Applications, vol. 156,
Cambridge University Press, Cambridge, 2015, pp. 56--72.

\bibitem{Kaul23}
H. Kaul, A. Kumar, A. Liu, J.A. Mudrock, P. Rewers, P. Shin,
M.S. Tanahara and K. To,
Bounding the list color function threshold from above,
\textit{Involve} \textbf{16} (2023), 849--882.

\bibitem{Kaul22}
H. Kaul, A. Kumar, J.A. Mudrock, P. Rewers, P. Shin and K. To,
On the list color function threshold,
\textit{J. Graph Theory} \textbf{105} (2024), 386--397.


\bibitem{Kirov2016} R. Kirov and R. Naimi, List coloring and $n$-monophilic graphs, \textit{Ars Comb.} \textbf{124} (2016), 329--340.

\bibitem{kosto}
A.V. Kostochka and A.F. Sidorenko,
Problem session of the Prachatice conference on graph theory,
in J. Ne\v{s}et\v{r}il and M. Fiedler (eds.),
\textit{Fourth Czechoslovakian Symposium on Combinatorics, Graphs and Complexity},
Annals of Discrete Mathematics, vol. 51,
North-Holland, Amsterdam, 1992, p. 380.

\bibitem{penrose}
O. Penrose, Convergence of fugacity expansions for classical systems, in Statistical
Mechanics: Foundations and Applications (T. A. Bak, ed.), Benjamin, New York/Amsterdam, 1976, pp. 101--109.

\bibitem{rea1}
R.C. Read,
An introduction to chromatic polynomials,
\textit{J. Combin. Theory} \textbf{4} (1968), 52--71.

\bibitem{rea2}
R.C. Read and W.T. Tutte,
Chromatic polynomials,
in L.W. Beineke and R.J. Wilson (eds.),
\textit{Selected Topics in Graph Theory 3},
Academic Press, New York, 1988, pp. 15--42.

\bibitem{Sokal}
A.D. Sokal,
Bounds on the complex zeros of (di)chromatic polynomials and
Potts-model partition functions,
\textit{Combin. Probab. Comput.} \textbf{10} (2001), 41--77.

\bibitem{Stone1985}
A.H. Stone,
Trees and power-sums,
\textit{Amer. Math. Monthly} \textbf{92} (1985), 328--331.

\bibitem{Thom}
C. Thomassen,
The chromatic polynomial and list colorings,
\textit{J. Combin. Theory Ser. B} \textbf{99} (2009), 474--479.

\bibitem{viz}
V.G. Vizing,
Coloring the vertices of a graph in prescribed colors,
\textit{Diskret. Analiz}, No. 29,
Metody Diskret. Anal. v Teorii Kodovi Skhem \textbf{101} (1976), 3--10.
	

\bibitem{wangwei}
W. Wang, J. Qian and Z. Yan,
When does the list-coloring function of a graph equal its chromatic polynomial,
\textit{J. Combin. Theory Ser. B} \textbf{122} (2017), 543--549.

\bibitem{Whi1932}
H. Whitney,
A logical expansion in mathematics,
\textit{Bull. Amer. Math. Soc.} \textbf{38} (1932), 572--579.
\end{thebibliography}
\end{document}